\newcommand*{\SC}{\mathcal S_{\text{\scalebox{1.2}{$1$}}}}
\newcommand*{\SCC}{\mathcal S_{\text{\scalebox{1.2}{$2$}}}}
\newcommand*{\SCCC}{\mathcal S_{\text{\scalebox{1.2}{$p$}}}}
\newtheorem{theorem}{Theorem}[section]
\newtheorem{lemma}{Lemma}[section]
\newtheorem{corollary}{Corollary}[section]
\newtheorem{proposition}{Proposition}[section]
\newtheorem{remark}{Remark}[section]
\newtheorem{assumption}{Assumption}
\newtheorem{definition}{Definition}
\newtheorem{example}{Example}
\newtheorem*{assumption*}{Assumption}
\newtheorem*{proof*}{Proof of}
\DeclareMathOperator{\ran}{ran}
\DeclareMathOperator{\spn}{span}
\numberwithin{equation}{section}
\renewcommand*{\eqref}[1]{\hyperref[{#1}]{\textup{\tagform@{\ref*{#1}}}}}
\newcommand{\AO}{A_{\mathrm{o}}}
\def \expandafter \normalsize \expandafter{\normalsize \setlength \abovedisplayskip{10pt plus 2pt minus 6pt}}
\def \expandafter \normalsize \expandafter{\normalsize \setlength \abovedisplayshortskip{0pt plus 2pt}}
\def \expandafter \normalsize \expandafter{\normalsize \setlength \belowdisplayskip{10pt plus 2pt minus 6pt}}
\def \expandafter \normalsize \expandafter{\normalsize \setlength \belowdisplayshortskip{5pt plus 2pt minus 3pt}}
\newcommand{\commRV}{\textcolor{black}}
\renewcommand\footnotemark{}
\newcommand{\revlab}[1]{\phantomsection\label{#1}} 
\begin{document}
\date{}
\title{\vspace{-0em}\Large Optimal linear prediction with functional observations: Why you can use a simple post-dimension reduction estimator}
\author{	\large	Won-Ki Seo$^\ast$\thanks{$^\ast$This paper has benefited greatly from the generosity and insight of the Co-editor and two anonymous referees. The R code used in this paper is available at the author's website. E-mail address: {won-ki.seo@sydney.edu.au}} 
	\\ University of Sydney	} 
\maketitle \vspace{-0em} 
\begin{abstract}
	\vspace{-.3em}		We study the optimal linear prediction of a random function that takes values in an infinite dimensional Hilbert space. We begin by characterizing the mean square prediction error (MSPE) associated with a linear predictor and discussing the minimal achievable MSPE. This analysis reveals that, in general, there are multiple non-unique linear predictors that minimize the MSPE, and even if a unique solution exists, consistently estimating it from finite samples is generally impossible. Nevertheless, we can define asymptotically optimal linear operators whose empirical MSPEs approach the minimal achievable level as the sample size increases. We show that, interestingly, standard post-dimension reduction estimators, which have been widely used in the literature, attain such asymptotic optimality under minimal conditions.  \\[2pt]
	\noindent 	 \textbf{MSC 2020}: 60G25. \\ 
	\medskip	 \noindent \textbf{Keywords}: Linear prediction; functional data; functional linear models; regularization.		 \vspace{0em}	
\end{abstract}

\newpage 


\section{Introduction} \label{secintro} 
We study the optimal linear prediction in an arbitrary Hilbert space \(\mathcal H\), and how to estimate the optimal linear predictor. Given recent developments in functional data analysis, studies on this subject hold significant importance and relevance to many empirical applications; see e.g.,    \cite{Park2012397}, \cite{aue2015prediction},  \cite{aue2017estimating}, \cite{klepsch2017prediction}  and   \cite{KLEPSCH2017252} to name only a few recent papers. The reader is also referred to \cite{BOSQ2007879}, \cite{bosq2014computing} and \cite{mollenhauer2023learning} containing an earlier mathematical exploration of this subject. 

Let \(\{Y_t\}_{t\geq 1}\) and \(\{X_t\}_{t\geq 1}\) be stationary sequences of mean-zero random elements, both taking values in \(\mathcal H\). Suppose that \(\widetilde{Y}_t = \AO X_t\), where \(\AO\) is a continuous linear operator, and it satisfies the following: 
for any arbitrary continuous linear operator \(B\),
\begin{equation}
	\mathbb{E}\|Y_t - \widetilde{Y}_t\|^2 = \mathbb{E}\|Y_t - \AO X_t\|^2 \leq \mathbb{E}\|Y_t - BX_t\|^2,\label{eqmse}
\end{equation}
where \(\|\cdot\|\) is the norm defined on \(\mathcal H\). We refer to \(\widetilde{Y}_t = \AO X_t\) as an optimal linear predictor (OLP) and  \(\AO\) as an optimal linear prediction operator (OLPO). Given the observations \(\{Y_t,X_t\}_{t=1}^T\), practitioners are often interested in constructing a predictor \(\widehat{Y}_t\) that converges (in probability) to an OLP as \(T\) increases. 
This is straightforward when \(\mathcal H = \mathbb{R}\), and \(Y_t\) and \(X_t\) are real-valued mean-zero random variables with positive variances. In this case, it is well known that the unique solution to \eqref{eqmse} is achieved by \(\AO  = \mathbb{E}[Y_tX_t]/\mathbb{E}[X_t^2]\) and the minimal achievable mean squared prediction error (MSPE) is \(\mathbb{E}\|Y_t - \AO X_t\|^2 = \mathbb{E}[Y_t^2]-(\mathbb{E}[Y_tX_t])^2/\mathbb{E}[X_t^2]\). The conventional plug-in-type estimator (or OLS estimator) of \(\AO \) may be defined by 
$\widehat{A}={T^{-1}\sum_{t=1}^T Y_t X_t}/{ T^{-1}\sum_{t=1}^T  X_t^2}.$
When the weak law of large numbers holds for $\{Y_t^2\}_{t\geq 1}$, $\{X_t^2\}_{t\geq 1}$ and $\{X_tY_t\}_{t\geq 1}$, we find that the following two results hold: (i) 	$\widehat{A}   \to_p    \AO = \mathbb{E}[X_tY_t]/\mathbb{E}[X_t^2]$ and (ii) $T^{-1} \sum_{t=1}^T  (Y_t - \widehat{A}X_t)^2  \to_p    \mathbb{E}[Y_t^2] - (\mathbb{E}[X_tY_t])^2/\mathbb{E}[X_t^2]$, 
where and hereafter \(\to_p\) denotes the convergence in probability with respect to the norm of \(\mathcal H\). That is, a consistent estimator of the OLPO can be constructed from the given observations. Furthermore, the empirical MSPE obtained from the estimator converges to the minimal achievable MSPE. 

However, in a more general situation where \(Y_t\) and \(X_t\) take values in a possibly infinite dimensional \(\mathcal H\), it is generally impossible to obtain parallel results. To see this with a simple example, suppose that \(\{Y_t,X_t\}_{t\geq 1}\) satisfies the following: for \(t \geq 1\), 
\begin{align}
	Y_t = \AO X_t + \varepsilon_t, \quad \AO = \sum_{j \geq 1} a_j f_j \otimes f_j, \quad \underline{a} \leq |a_j| \leq \overline{a}, \quad  \underline{a},  \overline{a}>0,  \label{eq003}
\end{align}
where $\varepsilon_t$ is independent of \(X_t\) and $\{f_j\}_{j\geq 1}$ is an orthonormal basis of $\mathcal H$. \(\AO\) specified in \eqref{eq003} is obviously the OLPO, but \(\AO\) is not consistently estimable in general. As will be detailed in Example \ref{ex1}, this is because, it is not possible to estimate \(a_{j}\) for \(j > T\) from \(T\) observations unless (i) a simplifying condition on \(A\), such as \(a_j=a\) for all \(j \geq m\) for some finite \(m\), holds and (ii) researchers are aware of this condition and use it appropriately. Even in the simple case where \(Y_t = \bar{\AO} X_t + \varepsilon_t\) with  \(\bar{\AO} = a I\)  (\(I\) denotes the identity map)  for \(a \in \mathbb{R}\), consistent estimation of \(\bar{\AO}\) is impossible for the same reason if we do not know such a simple structure of \(\bar{\AO}\) and hence allow a more general case given in \eqref{eq003} (note that, \(\bar{\AO} = a I\) is a special case of \(\AO\) in  \eqref{eq003} with \(a_j=a\)). 

Does this mean that it is impossible to statistically solve the optimal linear prediction problem in this general setting? The answer is no. We will demonstrate that, under mild conditions, there exists the minimum MSPE achievable by a linear predictor and it is feasible to construct a possibly inconsistent estimator \(\widehat{A}\) such that the empirical MSPE, computed as \(T^{-1} \sum_{t=1}^T \|Y_t-\widehat{A}X_t\|^2\), converges to the minimum MSPE.
Particularly, we show that a standard post dimension-reduction estimator, obtained by (i) reducing the dimensionality of the predictive variable \(X_t\) using the principal directions of its sample covariance and then (ii) applying the least squares method to estimate the linear relationship between the resulting lower dimensional predictive variable and \(Y_t\), is effective for linear prediction. This post dimension-reduction estimator has been widely used due to its simplicity. Its statistical properties have been studied under technical assumptions, which are challenging to verify, such as those concerning the eigenstructure of the covariance of \(X_t\); the reader is referred to, e.g., \cite{imaizumi2018} and \cite{seong2021functional}, where the assumptions of \cite{Hall2007} are adopted for function-on-function regression models. 
We show that, without such assumptions, a naive use of this simple post dimension-reduction estimator can be justified as a way to obtain a solution which asymptotically minimizes the MSPE. We also extend this finding to show that similar estimators, which involve further dimension reduction of \(Y_t\), also possess this desirable property under mild conditions.

The paper proceeds as follows: Section \ref{sec_blp} characterizes the minimal achievable MSPE by a linear predictor, followed by a discussion on the estimation of an asymptotically optimal predictor in Section \ref{secest1}. Further discussions and extensions are  given in Section \ref{sec_disext}. 
Section \ref{secest2} provides simulation evidence of our theoretical findings, and Section \ref{secconclusion} contains concluding remarks. The Appendix includes mathematical proofs of the theoretical results and some additional simulation results.

\section{Optimal linear prediction in Hilbert space}\label{sec_blp}
\subsection{Preliminaries}
For the subsequent discussion, we introduce notation. Let $\mathcal H$ be a separable Hilbert space with inner product $\langle \cdot, \cdot \rangle$ and norm $\|\cdot\|$. For $V\subset\mathcal H$,  let $V^\perp$ be the orthogonal complement to $V$. We let $\mathcal{L}_{\infty}$ be the set of continuous linear operators, and let $\|\mathcal T\|_{\infty}$, $\mathcal T^\ast$, $\ran \mathcal T$, and $\ker\mathcal T$ denote the operator norm, adjoint, range, and kernel of $\mathcal T$, respectively. 
$\mathcal T$ is self-adjoint if $\mathcal T=\mathcal T^\ast$. $\mathcal T$ is called nonnegative  if $\langle \mathcal Tx,x\rangle \geq 0$ for any $x \in \mathcal H$, and positive if also $\langle \mathcal Tx,x\rangle \neq 0$ for any $x \in \mathcal H\setminus\{0\}$. For $x,y \in \mathcal H$, we let $x\otimes y$ be the operator given by $z \mapsto \langle x,z \rangle y$ for $z\in \mathcal H$. $\mathcal T \in \mathcal L_{\infty}$ is compact if $\mathcal T = \sum_{j\geq 1} a_jv_{j} \otimes w_{j}$ for some orthonormal bases $\{v_{j}\}_{j \geq 1}$ and $\{w_{j}\}_{j \geq 1}$ and a sequence of nonnegative numbers $\{a_j\}_{j \geq 1}$ tending to zero; if $\mathcal T$ is also self-adjoint and nonnegative (see  \citealp{Bosq2000}, p.\ 35), we may assume that $v_{j} = w_{j}$. For any compact $\mathcal T\in \mathcal L_{\infty}$ and $p \in \mathbb{N}$, let $\|\mathcal T\|_{\mathcal S_p}$ be defined by $\|\mathcal T\|_{\mathcal S_p}^p  =  {\sum_{j=1}^\infty a_j^p}$ and let \(\SCCC\) be the set of compact operators \(\mathcal T\) with \(\|\mathcal T\|_{\mathcal S_p} < \infty\);  \(\SCCC\) is called the Schatten \(p\)-class. \(\SC\) (resp.\ \(\SCC\)) is also referred to the trace (resp.\ Hilbert-Schmidt) class. It is known that the following hold: \(\|\mathcal T\|_{\infty} \leq \|\mathcal T\|_{\SCC} \leq \|\mathcal T\|_{\SC}\) and \(\|\mathcal T\|_{\SCC}^2 = \sum_{j=1}^\infty \|\mathcal T w_j \|^2\) for any orthonormal basis \(\{w_j\}_{j\geq 1}\). For any $\mathcal H$-valued mean-zero random elements $Z$ and $\widetilde{Z}$ with $\mathbb{E}\|Z\|^2 < \infty$ and $\mathbb{E}\|\widetilde{Z}\|^2 < \infty$, their cross-covariance $C_{Z\tilde{Z}}=\mathbb{E}[Z \otimes \tilde{Z}]$ is a Schatten 1-class operator; if $Z=\tilde{Z}$, it reduces to the covariance of $Z$ and $\mathbb{E}\|Z\|^2 = \|C_{ZZ}\|_{\SC}$ holds. 

\subsection{Linear prediction in $\mathcal H$ }	
Consider a weakly stationary sequence  \(\{Y_t,X_t\}_{t\geq 1}\) with nonzero covariances \(C_{YY}=\mathbb{E}[Y_t\otimes Y_t]\) and \(C_{XX}=\mathbb{E}[X_t\otimes X_t]\), along with the cross-covariance \(C_{XY}=\mathbb{E}[X_t\otimes Y_t]\) (or equivalently \(C_{YX}^\ast\)).   
We hereafter write \({C}_{YY}\) and \({C}_{XX}\) as their spectral representations as follows, if necessary: 
\allowdisplaybreaks{
	\begin{align}
		{C}_{YY} = \sum_{j\geq 1} \kappa_j u_j \otimes u_j, \qquad 	{C}_{XX} =  \sum_{j\geq 1} \lambda_j v_j \otimes v_j, \label{specXX}
\end{align}}where  $\kappa_1\geq \kappa_2\geq \ldots \geq 0$, $\lambda_1\geq \lambda_2\geq \ldots \geq 0$, and $\{u_j\}_{j\geq 1}$ and $\{v_j\}_{j\geq 1}$ are orthonormal sets of $\mathcal H$. Unless otherwise stated, we assume that \(C_{YY}\) and \(C_{XX}\) are not finite rank operators and thus there are infinitely many nonzero eigenvalues in \eqref{specXX}, which is as usually assumed for covariances of Hilbert-valued random elements in the literature on functional data analysis. 

Note first that, for any \(B \in \mathcal L_{\infty}\), $\mathbb{E}\|Y_t-BX_t\|^2 = \|\mathbb{E}[(Y_t-BX_t)\otimes (Y_t-BX_t)]\|_{\SC}$ and hence the MSPE associated with \(B\) can be written as follows:
\begin{equation}
	\mathbb{E}\|Y_t-BX_t\|^2  = \|C_{YY} - C_{XY}B^\ast - B C_{XY}^\ast   +  BC_{XX}B^\ast\|_{\SC}.  \label{eqmspe}
\end{equation} 
Next, we provide the main result of this section, which not only gives us a useful characterization of the MSPE in \eqref{eqmspe}, but also provides essential preliminary results for the subsequent discussion. 
\begin{proposition}\label{prop1} 
	For any $B \in \mathcal L_{\infty}$, there exists a unique element $R_{XY}\in \mathcal L_{\infty}$ such that $C_{XY}  = {C}_{YY}^{1/2}R_{XY} {C}_{XX}^{1/2}$ and 
	\begin{equation}	\label{eqprop1}
		\mathbb{E}\|Y_t-BX_t\|^2 =   \|C_{YY}- C_{YY}^{1/2}R_{XY}R_{XY}^\ast C_{YY}^{1/2}\|_{\SC} + \| BC_{XX}^{1/2} -  C_{YY}^{1/2}R_{XY}\|_{\SCC}^2.
	\end{equation}
\end{proposition}
Proposition \ref{prop1} shows that the MSPE associated with \(B \in \mathcal L_{\infty}\) is the sum of the Schatten 1- and 2-norms of specific operators dependent on $C_{YY}$, $C_{XX}$, $R_{XY}$ and $B$; notably, only the latter term ($\| BC_{XX}^{1/2} -  C_{YY}^{1/2}R_{XY}\|_{\SCC}^2$) in \eqref{eqprop1} depends on \(B\). Thus, the former term ($\|C_{YY}- C_{YY}^{1/2}R_{XY}R_{XY}^\ast C_{YY}^{1/2}\|_{\SC}$) represents the minimal achievable MSPE by a linear predictor, while the latter can be understood as a measure of the inadequacy of \(B\) as a linear predictor. If \(\{w_j\}_{j\geq 1}\) is an orthonormal basis of \(\mathcal H\), this inadequacy becomes zero if and only if 
\begin{equation}\label{eqprop1add}
	\|(B C_{XX}^{1/2} - C_{YY}^{1/2} R_{XY})w_j\|^2 = 0 \quad \text{for all $j \geq 1$.}
\end{equation}
Based on these findings, we obtain the following two characterizations of an OLPO in Corollary \ref{cor0}: the first is a direct consequence of \eqref{eqprop1add} and the Hahn-Banach extension theorem (see, e.g., Theorem 1.9.1 of \citealp{megginson1998}), which, in turn, implies the second due to the fact that  \(C_{XY}  = {C}_{YY}^{1/2}R_{XY} {C}_{XX}^{1/2}\) as observed in Proposition \ref{prop1}.
\begin{corollary}\label{cor0} $A$ is an OLPO if and only if any of the following equivalent conditions holds: (a)~$AC_{XX}^{1/2} = C_{YY}^{1/2}R_{XY}$ and (b)~$AC_{XX} = C_{XY}$.
\end{corollary}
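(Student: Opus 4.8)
The plan is to read the corollary directly off the decomposition in Proposition~\ref{prop1}. For any $B\in\mathcal L_\infty$ write $M := \|C_{YY}- C_{YY}^{1/2}R_{XY}R_{XY}^\ast C_{YY}^{1/2}\|_{\SC}$ for the $B$-free term and $D_B := BC_{XX}^{1/2} - C_{YY}^{1/2}R_{XY}$ for the $B$-dependent operator, so that Proposition~\ref{prop1} becomes $\mathbb E\|Y_t - BX_t\|^2 = M + \|D_B\|_{\SCC}^2$. Since $\|D_B\|_{\SCC}^2 \geq 0$, every linear predictor has MSPE at least $M$, and $M$ is exactly the minimal achievable MSPE (as noted after Proposition~\ref{prop1}). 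Hence $A$ is an OLPO if and only if it attains this minimum, i.e.\ if and only if $\|D_A\|_{\SCC}^2 = 0$. Using $\|D_A\|_{\SCC}^2 = \sum_{j\ge 1}\|D_A w_j\|^2$ for an orthonormal basis $\{w_j\}_{j\ge1}$, the norm vanishes exactly when $D_A = 0$, which is condition (a). This yields the equivalence ``$A$ is an OLPO'' $\iff$ (a).

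To make the step ``$M$ is minimal'' self-contained I would verify $\inf_{B}\|D_B\|_{\SCC}=0$. Taking the finite-rank operators $B_n := C_{YY}^{1/2}R_{XY}\sum_{j\le n}\lambda_j^{-1/2} v_j\otimes v_j$ gives $B_n C_{XX}^{1/2} = C_{YY}^{1/2}R_{XY}P_n$, where $P_n=\sum_{j\le n} v_j\otimes v_j$, so $\|D_{B_n}\|_{\SCC}=\|C_{YY}^{1/2}R_{XY}(I-P_n)\|_{\SCC}$. Since $C_{YY}^{1/2}R_{XY}\in\SCC$ (the product of a Hilbert--Schmidt and a bounded operator) and $P_n\to I$ strongly on $\overline{\ran C_{XX}}$ while $R_{XY}$ vanishes on $\ker C_{XX}^{1/2}$, a dominated-convergence argument on $\|(I-P_n)(C_{YY}^{1/2}R_{XY})^\ast w_j\|^2$ drives this to zero.

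It then remains to prove (a)$\iff$(b). For (a)$\Rightarrow$(b) I would right-multiply $AC_{XX}^{1/2} = C_{YY}^{1/2}R_{XY}$ by $C_{XX}^{1/2}$ and use $C_{XX}=C_{XX}^{1/2}C_{XX}^{1/2}$ with the factorization $C_{XY}=C_{YY}^{1/2}R_{XY}C_{XX}^{1/2}$ to obtain $AC_{XX}=C_{XY}$. For (b)$\Rightarrow$(a), the same factorization gives $(AC_{XX}^{1/2}-C_{YY}^{1/2}R_{XY})C_{XX}^{1/2}=AC_{XX}-C_{XY}=0$, so $D_A$ vanishes on $\ran C_{XX}^{1/2}$ and, by continuity, on $\overline{\ran C_{XX}^{1/2}}$.

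The hard part will be the final step of (b)$\Rightarrow$(a): upgrading ``$D_A=0$ on $\overline{\ran C_{XX}^{1/2}}$'' to ``$D_A=0$ on all of $\mathcal H$.'' This requires controlling $D_A$ on $\ker C_{XX}^{1/2} = (\overline{\ran C_{XX}^{1/2}})^\perp$. There $AC_{XX}^{1/2}$ vanishes trivially, so it suffices that $C_{YY}^{1/2}R_{XY}$, equivalently $R_{XY}$, vanishes on $\ker C_{XX}^{1/2}$. This is exactly the normalization forced by the uniqueness in Proposition~\ref{prop1}: altering $R_{XY}$ on $\ker C_{XX}^{1/2}$ leaves $C_{YY}^{1/2}R_{XY}C_{XX}^{1/2}$ unchanged, so the unique $R_{XY}$ must satisfy $R_{XY}|_{\ker C_{XX}^{1/2}}=0$. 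I would isolate this vanishing property from the construction of $R_{XY}$ and invoke it; with it, $D_A=0$ on both $\overline{\ran C_{XX}^{1/2}}$ and its orthogonal complement, hence $D_A=0$, giving (a).
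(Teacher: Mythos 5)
Your proposal is correct, and it follows the same basic route as the paper — reading the corollary off the decomposition in Proposition~\ref{prop1} — but it is substantially more complete than the paper's own argument. The paper's proof is one line: it notes that $\|BC_{XX}^{1/2}-C_{YY}^{1/2}R_{XY}\|_{\SCC}^2=0$ iff $BC_{XX}^{1/2}-C_{YY}^{1/2}R_{XY}=0$ (your equivalence ``OLPO $\iff$ (a)'', modulo minimality), appeals to the Hahn--Banach extension theorem, and then derives (b) from (a) by right-multiplying with $C_{XX}^{1/2}$, exactly as you do. It never explicitly establishes the two points you correctly identified as the real content: first, that the $B$-free term is actually the \emph{infimum} of the MSPE over $B\in\mathcal L_\infty$ (without this, an OLPO could in principle have $\|D_A\|_{\SCC}>0$); second, the direction (b)~$\Rightarrow$~(a). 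Your finite-rank approximants $B_n = C_{YY}^{1/2}R_{XY}\sum_{j\le n}\lambda_j^{-1/2}v_j\otimes v_j$ together with the dominated-convergence argument settle the first point, and your kernel/range splitting settles the second. Both of your added steps hinge on the normalization $R_{XY}\restriction_{\ker C_{XX}}=0$, and your justification of it is sound: it is exactly the condition $R_{XY}=\Pi_Y R_{XY}\Pi_X$ built into Baker's Theorem~1 (the result the paper's proof of Proposition~\ref{prop1} invokes), and it is also forced by uniqueness, since otherwise $R_{XY}\Pi_X$ (with $\Pi_X$ the projection onto the closure of $\ran C_{XX}$) would be a second bounded operator of norm at most one satisfying the same factorization. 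In short: same skeleton, but your version supplies the minimality verification and the (b)~$\Rightarrow$~(a) argument that the paper leaves implicit, which is precisely where the mathematical work lies.
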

Condition (\(b\)) follows directly from condition (\(a\)) and Proposition \ref{prop1}, and it is notably align with the characterization of an OLPO provided by \cite{BOSQ2007879}; Remark \ref{rem1} outlines the distinctions between our findings and the existing result in more detail. From Corollary \ref{cor0}, we find that the minimum MSPE is attained by \(A\in \mathcal L_{\infty}\) satisfying \(AC_{XX}=C_{XY}\) (or \(AC_{XX}^{1/2} = C_{YY}^{1/2}R_{XY}\)). However, such an operator \(A\) is not uniquely determined; particularly, the equation does not specify how \(A\) acts on \(\ker C_{XX}\), allowing \(A\) to agree with any arbitrary element in $\mathcal L_{\infty}$ on \(\ker C_{XX}\) (see Remark \ref{rem2}). When \(C_{XX}\) is not injective, there are multiple choices of \(A\) that achieve the minimum MSPE \commRV{(see Remark \ref{remadd} provide a more detailed discussion on the case where $C_{XX}$ is not injective.)}. In infinite dimensional settings, the injectivity of \(C_{XX}\)  is  a stringent assumption, and verifying this condition from finite observations is impractical.  Thus, pursuing prediction under the existence of the unique OLPO, as in the standard univariate or multivariate prediction, is restrictive. Even if a different setup is considered with a different purpose, similar concerns about the requirements for unique identification were recently raised by \cite{Babii2025}, and the enthusiastic reader is referred to their paper for more detailed discussion on the topic.

\begin{remark}\label{remadd} \normalfont
\commRV{If $C_{XX}$ is not injective, then $\ker C_{XX}$ is a nontrivial subspace, containing a nonzero vector, say $x \in \ker C_{XX}$. Suppose that a linear operator $A$ satisfies  $AC_{XX}=C_{XY}$ and $Ax = 0$ for all $x \in \ker C_{XX}$. From Corollary \ref{cor0}, we know that $A$ is an OLPO. Consider another linear operator, $\widetilde{A}$, which agrees with $A$ on $[\ker C_{XX}]^\perp$, but may not satisfy $\widetilde{A} x=0$  for $x \in \ker C_{XX}$. Corollary \ref{cor0} implies that the behavior of an operator on $\ker C_{XX}$ is irrelevant in characterizing characterizing OLPOs, and hence $\widetilde{A}$ is also an OLPO. In this way, many different OLPOs can be defined in this case. When $A$ satisfying $AC_{XX}=C_{XY}$ is not uniquely identified, practitioners may restrict attention to estimating $A$ on $[\ker C_{XX}]^\perp$, or equivalently, assume that $A = 0$ on $\ker C_{XX}$ (see e.g., \citealp{Benatia2017}, Section 3). As will be discussed, the direction pursued in this paper is significantly different, and we do not explore this further.} 
\end{remark}

\begin{remark}\label{rem1} \normalfont
	Condition (\(b\)) in Proposition \ref{prop1} was earlier obtained as the requirement for $A \in \mathcal L_{\infty}$ to be an OLPO by Propositions 2.2-2.3 of \cite{BOSQ2007879}. Compared with this earlier result, Proposition \ref{prop1} not only provides more general results, such as the explicit expression of the gap between the minimal attainable MSPE and the MSPE associated with any \(B \in \mathcal L_{\infty}\), but it also employs a distinct approach. The result of \cite{BOSQ2007879} relies on the notion of a linearly closed subspace and an extension of the standard projection theorem, while Proposition \ref{prop1} is established by an algebraic proof based on the representation of cross-covariance operators in \cite{baker1973joint}. 
\end{remark}

\begin{remark}\label{rem2} \normalfont
	By invoking the Hahn-Banach extension theorem (Theorem 1.9.1 of \citealp{megginson1998}), we may assume that \(A\) satisfying \(AC_{XX}=C_{XY}\) is a unique continuous linear map defined on the closure of  \(\ran C_{XX}\), which is not equal to \(\mathcal H\)  if \(C_{XX}\) is not injective. Thus, if there exists another continuous linear operator \(\widetilde{A}\) which agrees with \(A\) on the closure of \(\ran C_{XX}\) but not on \([\ran C_{XX}]^\perp\), then \(\widetilde{A}\) also satisfies that \(\mathbb{E}\|Y_t-\widetilde{A}X_t\|^2 = \|C_{YY}- C_{YY}^{1/2}R_{XY}R_{XY}^\ast C_{YY}^{1/2}\|_{\SC}\).
\end{remark}
The results given in Proposition \ref{prop1} and Remark \ref{rem2} imply that, particularly when the predictive variable is function-valued, there may be multiple OLPOs that satisfy \eqref{eqmse}. Furthermore, even if a unique OLPO exists, it may not be consistently estimable; a more detailed discussion is given in Example \ref{ex1} below. This means that we are in a somewhat different situation from the previous simple univariate case discussed in Section \ref{secintro}, where we can estimate the OLP by consistently estimating the OLPO. 
\begin{example} \label{ex1} \normalfont Suppose that  \(\{Y_t,X_t\}_{t\geq 1}\) satisfies \eqref{eq003}, \(X_t\) has a positive covariance \(C_{XX}\), and \(\varepsilon_t\) is serially independent and also independent of \(X_s\) for any \(s\). In this case, \(\AO C_{XX}=C_{XY}\), making \(\AO\) an OLPO. Since \(C_{XX}\) is injective, any continuous linear operator \(\widetilde{A}\) agrees with $\AO$ on the closure of  $\ran C_{XX}$ also agrees with \(\AO\) on \(\mathcal H\) (see Remark \ref{rem2} and note that the closure of \(\ran C_{XX}\) is \(\mathcal H\) in this case).
	However, consistently estimating $\AO$ without any further assumptions is impossible. To illustrate this, we may consider the case where $\{f_j\}_{j\geq 1}$ in \eqref{eq003} are known for simplicity. Even in this simplified scenario, there are infinitely many unknown parameters $\{a_j\}_{j \geq 1}$ to be estimated from only $T$ samples, necessitating additional assumptions on $\{a_j\}_{j\geq 1}$ for consistent estimation. 
\end{example}

\begin{remark}\label{remadd4} \revlab{r2_major1}\normalfont 
	\commRV{From the observation in Corollary \ref{cor0}, we know that an OLPO \( A \) satisfies \( C_{XX}A^\ast = C_{XY}^\ast \), and thus one might consider defining the unique OLPO \( A \) as the adjoint of \( C_{XX}^\dag C_{XY}^\ast \) by utilizing a generalized inverse \( C_{XX}^\dag \) (of \( C_{XX} \)), which uniquely exists. A standard notion of a generalized inverse in a Hilbert space setting may be the Moore--Penrose inverse; see \citet[Definition 2.2]{Engl2000} and also, e.g., \cite{BS2018} for its use in time series analysis in a Hilbert space. However, the existence of the Moore--Penrose inverse requires \( C_{XX} \) to have a closed range, but unique identification via this approach is not achieved; this is because any covariance operator is compact and hence cannot have a closed range unless it is of finite rank (see, e.g., \citealp{megginson1998}, Proposition 3.4.6). More general concepts of generalized inverses, which can even be defined in a Banach space setting, may also be considered, as in, e.g., \citet[Section 2.3]{seo_2023_fred} for more general function-valued time series, but they also require $C_{XX}$ to have a closed range.
	} 
\end{remark}

\begin{remark} \label{remadd3}\normalfont 
	\commRV{
		Given that \( A \) is a possibly non-unique solution to the linear operator equation \( C_{XX}A^\ast = C_{XY}^\ast \) in the Banach space of Shatten $p$-class operators for $p \in \mathbb{N}$, it may be possible to define its minimum norm solution $A^\dag$ satisfying $C_{XX} A^\dag =  C_{XY}^\ast$ and $\|A^\dag\| = \inf\{\|\tilde{A}\|: C_{XX} \tilde{A} = C_{XY}^\ast\}$, which exists under appropriate mathematical conditions, as the OLPO. It may also be possible to define an regularized solution $A^\dag_\alpha$ which approaches $A^\dag$, and characterize the convergence rate by extending the existing results (see e.g., \citealp{Schuster2012}, Chapter 3).  However, this is beyond the scope of the present paper, and we leave it for future work, with thanks to an anonymous referee for this observation.} 
\end{remark}

	\section{Estimation} \label{secest1} 
	
	We observed that in a general Hilbert space setting, there can be not only multiple OLPOs but also instances where, even if a unique OLPO exists, consistent estimation of it is impossible. 	Nevertheless, under mild conditions, we may construct a predictor using a standard post dimension-reduction estimator in such a way that the associated empirical MSPE converges to the minimum MPSE as in the simple univariate case. 
	To propose such a predictor, we let  $$\widehat{C}_{XY}=T^{-1}\sum_{t=1}^T X_t\otimes Y_t, \quad \widehat{C}_{XX}=T^{-1}\sum_{t=1}^T X_t\otimes X_t.$$	It is well known that $\widehat{C}_{XX}$ allows the spectral representation $\widehat{C}_{XX}=\sum_{j\geq1} \hat{\lambda}_j \hat{v_j} \otimes \hat{v}_j$ for the eigenvalues $\{\hat{\lambda}_j\}_{j \geq 1}$ and the corresponding eigenvectors $\{\hat{v}_j\}_{j \geq 1}$  (see, e.g., \citealp{Bosq2000}, p.\ 35). Let  
	\begin{align} \label{eqsection2}
		\widehat{C}_{XX,k_{T}} = \sum_{j=1}^{k_{T}} \hat{\lambda}_j \hat{v}_j \otimes \hat{v}_j, \quad 	\widehat{C}_{XX,k_{T}}^{-1} = \sum_{j=1}^{k_{T}} \hat{\lambda}_j^{-1} \hat{v}_j \otimes \hat{v}_j,
	\end{align}
	where \(k_T\) is an integer satisfying the following assumption: below, we let \(a_1 \wedge a_2 = \min\{a_1,a_2\}\) for \(a_1,a_2\in\mathbb{R}\) and assume that \(\max_{j\geq 1}\{j: E_j\} = 1\) if the condition \(E_j\) is not satisfied for all \(j\geq 1\).  
	\begin{assumption}[Elbow-like rule] \label{assumpelbow}
		$k_T$ in \eqref{eqsection2} is given by $${k}_T = \max_{j\geq 1} \left\{j: \hat{\lambda}_{j} \geq \hat{\lambda}_{j+1} + \tau_T\right\} \wedge \upsilon_{T}^{-1},$$ 
			where \(\tau_T\) and $\upsilon_{T}$ are user-specific choices of positive constants decaying to $0$ as \(T\to \infty\), and both $\tau_T^{-1}$ and $\upsilon_{T}^{-1}$ are bounded above by $\gamma_0 T^{\gamma_1}$ for some $\gamma_0>0$ and $\gamma_1 \in (0,1/2)$. 
		\end{assumption}
		\(\widehat{C}_{XX,k_{T}}^{-1}\) in \eqref{eqsection2} is understood as the inverse of \(\widehat{C}_{XX}\) viewed as a map acting on the restricted subspace \(\spn\{\hat{v}_j\}_{j=1}^{k_T}\) and \(k_T\) in Assumption \ref{assumpelbow} is a random integer by its construction (see Remark \ref{rem3} below). \revlab{r1_major1}\commRV{The quantity $\upsilon_T$ in the above assumption plays the role of an upper bound for the truncation parameter $k_T$, and by including \(\upsilon_T\) in Assumption~\ref{assumpelbow}, we ensure that \(k_T\) becomes \(o_p(T^{1/2})\). As will be discussed in detail later, we construct our estimator based on $\widehat{C}_{XX,k_{T}}^{-1}$, which becomes unstable as $k_T$ gets larger. Accordingly, Assumption~\ref{assumpelbow} imposes a general growth-rate condition on $k_T$ via $\upsilon_T$, thereby facilitating the proofs of our main theoretical results.} Even if the choice of \(k_T\) depends on various contexts requiring a  regularized inverse of \(\widehat{C}_{XX}\), it is commonly set to a much smaller number than \(T\), and thus this condition does not impose any practical restrictions. A practically more meaningful decision is made by the first component, \(\max_{j\geq 1} \{j:\hat{\lambda}_{j} \geq \hat{\lambda}_{j+1} + \tau_T\}\) in Assumption \ref{assumpelbow}.  Firstly, given that \(\hat{\lambda}_{k_T+1} > 0\), this condition implies that \(\hat{\lambda}_{k_T}^{-1} < \tau_T^{-1}\), and thus \(\|\widehat{C}_{XX,k_T}^{-1}\|_{\infty}\leq \tau_T^{-1}\), where \(\tau_{T}^{-1}\) diverges slowly compared to \(T\); clearly, this is one of the essential requirements for \(k_T\) (as the rank of a regularized inverse of $\widehat{C}_{XX}$) to satisfy in the literature employing similar regularized inverses.  Secondly, \(k_T\) is determined near the point where the gap \(\hat{\lambda}_{j} - \hat{\lambda}_{j+1}\) is no longer smaller than a specified threshold \(\tau_T\) for the last time. This approach is, in fact, analogous to the standard elbow rule used to determine the number of principal components in multivariate analysis based on the scree plot.
		Thus, even if Assumption \ref{assumpelbow} details some specific mathematical requirements necessary for our asymptotic analysis, these seem to closely align with existing practical rules for selecting \(k_T\), commonly employed in current practice; for example, see \cite{chang2021stock}. 
			
			Using the regularized inverse $\widehat{C}_{XX,k_T}^{-1}$, the proposed predictor is constructed as follows:   
			\begin{align} \label{eqpredictor}
				\widehat{Y}_t = \widehat{A}X_t, \quad \text{where}\quad  \widehat{A}(\cdot) = \widehat{C}_{XY}\widehat{C}_{XX,k_{T}}^{-1}(\cdot) = \frac{1}{T} \sum_{t=1}^T \sum_{j=1}^{k_T} \hat{\lambda}_j^{-1} \langle \hat{v}_j,\cdot \rangle \langle \hat{v}_j,X_t \rangle Y_t.
			\end{align}
			The above predictor is standard in the literature on functional data analysis as the least squares predictor of \(Y_t\) given the \textit{projection} of \(X_t\) onto the space spanned by the eigenvectors corresponding to the first $k_T$ largest eigenvalues; a similar estimator was earlier considered by \cite{Park2012397}. The predictor described in \eqref{eqpredictor} and its modifications (such as those that will be considered in Section \ref{sec_reduction}) have been widely studied in the literature and adapted to various contexts; see e.g.,  \cite{Bosq2000}, \cite{Yao2005}, \cite{aue2015prediction}, \cite{aue2017estimating},   \cite{klepsch2017prediction},  \cite{luo2017function} and \cite{Zhang2018}. 
			
			\begin{remark} \label{rem3}\normalfont
				A significant difference in \(\widehat{A}\) compared to most of the existing estimators, lies in the choice of \(k_T\). In many earlier articles, \(k_T\) is directly chosen by researchers and thus regarded as deterministic. However, as pointed out by \cite{seong2021functional}, even in this case, it is generally not recommended to choose \(k_T\) arbitrarily without taking the eigenvalues \(\hat{\lambda}_j\) into account. Therefore, it is natural to view \(k_T\) as random from a practical point of view.
			\end{remark}
			
			\begin{remark}\label{remadd1}\normalfont \revlab{r2_major4}
		\commRV{As in standard practice for predicting or forecasting functional time series, one may use $k_T$ that minimizes a prediction-error-type criterion, which is typically expressed as a function of $k_T$ (see, e.g., \citealp{aue2015prediction}). Alternatively, practitioners may prefer to minimize out-of-sample forecasting errors (see, e.g., \citealp{shang2019robust}). These approaches determine $k_T$ in a data-driven manner based on prediction accuracy. The requirements in Assumption~\ref{assumpelbow} can be incorporated into this criterion-based choice of $k_T$. For example, let $\tau_{T,1}, \ldots, \tau_{T,M}$ (resp.\ $\upsilon_{T,1}, \ldots, \upsilon_{T,M}$) be candidates for $\tau_T$ (resp.\ $\upsilon_T$), such that
		$c_{\min} T^{\gamma_1} < \tau_{T,1} < \cdots < \tau_{T,M} < c_{\max} T^{\gamma_1}$ (resp.\ $c_{\min} T^{\gamma_1} < \upsilon_{T,1} < \cdots < \upsilon_{T,M} < c_{\max} T^{\gamma_1}$), where $\gamma_1 \in (0,1/2)$ as required by Assumption~\ref{assumpelbow}, $0 < c_{\min} < c_{\max}$, and $M>0$. We then compute the prediction criteria for each pair $(\tau_{T,i}, \upsilon_{T,j})$ and identify the optimal pair $(\tau_{T,i^\ast}, \upsilon_{T,j^\ast})$ to serve as $\tau_T$ and $\upsilon_T$. This choice is justified both as a practical means of improving prediction accuracy and also ensures that $\widehat{A}$ is asymptotically OLPO, as required in this paper.}
			\end{remark}

			To establish consistency of \(\widehat{A}\) in \eqref{eqpredictor}, certain assumptions have been employed in the aforementioned literature, particularly concerning the eigenstructure of \(C_{XX}\) and the unique identification of the target estimator \(A\). However, as illustrated by Example \ref{ex1} and Remark \ref{rem2}, these assumptions are not guaranteed to hold even in cases where there exists a well defined OLPO. 
			Thus, in this paper, we do not make such assumptions for consistency, but allow \(\widehat{A}\) to potentially be inconsistent. Our main result in this section is that, despite not relying on the typical assumptions for consistency, the predictor \(\widehat{Y}_t\) asymptotically minimizes the MSPE, which only requires mild conditions on the sample (cross-)covariance operators.	
			
			For the subsequent discussion, we define the following: for any \(B \in \mathcal L_{\infty}\),
			\begin{equation*}
				\Sigma(B) =\frac{1}{T}\sum_{t=1}^T (Y_t - BX_t) \otimes (Y_t - BX_t).  
			\end{equation*}
			Note that \(\|\Sigma(B)\|_{\SC}\) is equivalent to the empirical MSPE, given by \(T^{-1} \sum_{t=1}^T \|Y_t-BX_t\|^2\). We then define an asymptotically OLPO as a random bounded linear operator producing a predictor that asymptotically minimizes the MSPE in the following sense:
			\begin{definition}
				Any random bounded linear operator \(\widehat{B}\) is called an asymptotically OLPO if
				\begin{equation*}
					\|\Sigma(\widehat{B})\|_{\SC} \to_p \Sigma_{\min} \text{\quad as \(T\to \infty\)},
				\end{equation*}
				where \(\Sigma_{\min} = \|C_{YY}- C_{YY}^{1/2}R_{XY}R_{XY}^\ast C_{YY}^{1/2}\|_{\SC}\), which is the minimum MSPE that can be achieved by a linear predictor, as defined in Proposition \ref{prop1}.
			\end{definition}
			We will employ the following assumption: 
			\begin{assumption}[Standard rate of convergence]\label{assum1}  $\|\widehat{C}_{YY}-C_{YY}\|_{\infty}$, $\|\widehat{C}_{XX}-C_{XX}\|_{\infty}$, and $\|\widehat{C}_{XY}-C_{XY}\|_{\infty}$ are $O_p(T^{-1/2})$.
			\end{assumption} 
			We observe that \(\{Y_t\otimes Y_t - C_{YY}\}_{t\geq 1}\),   \(\{X_t\otimes X_t - C_{XX}\}_{t\geq 1}\), and  \(\{X_t\otimes Y_t - C_{XY}\}_{t\geq 1}\) are stationary sequences of Schatten 2-class operators. These operator-valued sequences may also be understood as stationary sequences in a separable Hilbert space  (\citealp{Bosq2000}, p.\ 34). Then, Assumption \ref{assum1} is satisfied under some non-restrictive regularity conditions; see e.g., Theorems 2.16-2.18 of \cite{Bosq2000} concerning the central limit theorems for Hilbert-valued random elements. Given that we are dealing with a weakly stationary sequence \(\{Y_t,X_t\}_{t\geq 1}\), Assumption \ref{assum1} appears to be standard. Furthermore, it can be relaxed to a weaker requirement by imposing stricter conditions on \(\tau_T\) and \(\upsilon_T\) in Assumption \ref{assumpelbow}; this will be detailed in Section \ref{sec_general}.

				We now present the main result of this paper. 
				\begin{theorem}\label{thmblp} Under Assumptions \ref{assumpelbow}-\ref{assum1}, $\widehat{A}$ is an asymptotically OLPO, i.e.,  \(\|\Sigma(\widehat{A})\|_{\SC} \to_p \Sigma_{\min}\).
				\end{theorem}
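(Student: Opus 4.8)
The plan is to fix any OLPO $A$ (one exists by Corollary \ref{cor0}, and every such $A$ gives $\mathbb{E}\|Y_t-AX_t\|^2=\Sigma_{\min}$), set $\eta_t=Y_t-AX_t$, and exploit that the optimal residual is uncorrelated with the regressor, $C_{X\eta}:=\mathbb{E}[X_t\otimes\eta_t]=C_{XY}-AC_{XX}=0$ by condition (b). With $\widehat{C}_{X\eta}:=\widehat{C}_{XY}-A\widehat{C}_{XX}=T^{-1}\sum_t X_t\otimes\eta_t$, the decomposition $\widehat{C}_{XY}=A\widehat{C}_{XX}+\widehat{C}_{X\eta}$ together with $\widehat{C}_{XX}\widehat{C}_{XX,k_T}^{-1}=\widehat{P}_{k_T}$, where $\widehat{P}_{k_T}=\sum_{j=1}^{k_T}\hat{v}_j\otimes\hat{v}_j$, yields the key algebraic identity
\[
\widehat{A}=A\widehat{P}_{k_T}+\widehat{C}_{X\eta}\widehat{C}_{XX,k_T}^{-1},\qquad\text{so}\qquad \widehat{A}-A=-A(I-\widehat{P}_{k_T})+\widehat{C}_{X\eta}\widehat{C}_{XX,k_T}^{-1}.
\]
Hence $Y_t-\widehat{A}X_t=\eta_t+(A-\widehat{A})X_t$, which is the representation I would build the entire proof on.

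Since $\Sigma(\widehat{A})$ is nonnegative and self-adjoint, $\|\Sigma(\widehat{A})\|_{\SC}=\tr\Sigma(\widehat{A})=T^{-1}\sum_t\|Y_t-\widehat{A}X_t\|^2$, and expanding the residual identity gives
\[
\|\Sigma(\widehat{A})\|_{\SC}=\underbrace{T^{-1}\textstyle\sum_t\|\eta_t\|^2}_{(\mathrm{I})}+2\,\underbrace{T^{-1}\textstyle\sum_t\langle\eta_t,(A-\widehat{A})X_t\rangle}_{(\mathrm{II})}+\underbrace{T^{-1}\textstyle\sum_t\|(A-\widehat{A})X_t\|^2}_{(\mathrm{III})}.
\]
I would dispatch $(\mathrm{I})$ by the weak law of large numbers for the stationary scalar sequence $\{\|\eta_t\|^2\}$, which has finite mean $\Sigma_{\min}$ and is governed by the same weak-dependence conditions that underlie Assumption \ref{assum1}, so $(\mathrm{I})\to_p\Sigma_{\min}$. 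Term $(\mathrm{III})$ equals the empirical excess risk $\|(\widehat{A}-A)\widehat{C}_{XX}^{1/2}\|_{\SCC}^2$, because $T^{-1}\sum_t (A-\widehat{A})X_t\otimes(A-\widehat{A})X_t=(A-\widehat{A})\widehat{C}_{XX}(A-\widehat{A})^\ast$; and once $(\mathrm{III})\to_p0$ is established, the cross term $(\mathrm{II})$ follows from Cauchy--Schwarz, being bounded by $2(\mathrm{I})^{1/2}(\mathrm{III})^{1/2}$. Everything thus reduces to $(\mathrm{III})\to_p0$.

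For $(\mathrm{III})$, the identity for $\widehat{A}-A$ and the triangle inequality split $\|(\widehat{A}-A)\widehat{C}_{XX}^{1/2}\|_{\SCC}$ into a regularization (variance) part and a truncation (bias) part. The variance part is $\|\widehat{C}_{X\eta}\widehat{C}_{XX,k_T}^{-1}\widehat{C}_{XX}^{1/2}\|_{\SCC}=\|\widehat{C}_{X\eta}\widehat{C}_{XX,k_T}^{-1/2}\|_{\SCC}\le\|\widehat{C}_{X\eta}\|_\infty(\sum_{j\le k_T}\hat{\lambda}_j^{-1})^{1/2}\le\|\widehat{C}_{X\eta}\|_\infty(k_T\tau_T^{-1})^{1/2}$, using $\hat\lambda_{k_T}^{-1}\le\tau_T^{-1}$. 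Since $\widehat{C}_{X\eta}=(\widehat{C}_{XY}-C_{XY})-A(\widehat{C}_{XX}-C_{XX})$ is $O_p(T^{-1/2})$ in operator norm by Assumption \ref{assum1}, and $k_T\tau_T^{-1}=O(T^{2\gamma_1})$ with $\gamma_1<1/2$ by Assumption \ref{assumpelbow}, this part is $O_p(T^{\gamma_1-1/2})=o_p(1)$. The bias part obeys $\|A(I-\widehat{P}_{k_T})\widehat{C}_{XX}^{1/2}\|_{\SCC}^2\le\|A\|_\infty^2\sum_{j>k_T}\hat{\lambda}_j$, so it remains to prove $\sum_{j>k_T}\hat{\lambda}_j\to_p0$.

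The hard part is this final step: showing the data-driven truncation level $k_T$ captures asymptotically all of the variance of $X_t$. I would reduce it to $k_T\to_p\infty$. Writing $\sum_{j>k_T}\hat{\lambda}_j=\tr\widehat{C}_{XX}-\sum_{j\le k_T}\hat{\lambda}_j$, the scalar law of large numbers gives $\tr\widehat{C}_{XX}=T^{-1}\sum_t\|X_t\|^2\to_p\tr C_{XX}$, while $\sum_{j\le k_T}\hat{\lambda}_j\ge\sum_{j\le k_T}\lambda_j-k_T\|\widehat{C}_{XX}-C_{XX}\|_\infty$ with $k_T\|\widehat{C}_{XX}-C_{XX}\|_\infty=O(T^{\gamma_1})O_p(T^{-1/2})=o_p(1)$ and $\sum_{j\le k_T}\lambda_j\to_p\tr C_{XX}$ whenever $k_T\to_p\infty$ (monotone tail of the summable sequence $\{\lambda_j\}$). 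To get $k_T\to_p\infty$ from the elbow rule, I would argue that since $\lambda_j>0$ for all $j$ and $\lambda_j\downarrow0$, for every $M$ there is an index $j\ge M$ with a strictly positive population gap $\lambda_j-\lambda_{j+1}$; because $\tau_T\to0$ eventually falls below this gap and the empirical gaps differ from the population ones by at most $2\|\widehat{C}_{XX}-C_{XX}\|_\infty=O_p(T^{-1/2})=o_p(\tau_T)$ (as $\tau_T^{-1}=O(T^{\gamma_1})$, $\gamma_1<1/2$), the rule selects $k_T\ge M$ with probability tending to one, and arbitrariness of $M$ gives $k_T\to_p\infty$. The points I expect to require the most care are making this gap-transfer argument robust to the $\wedge\,\upsilon_T^{-1}$ cap and the edge conventions in the $\max$ defining $k_T$, and verifying the scalar laws of large numbers for $\{\|\eta_t\|^2\}$ and $\{\|X_t\|^2\}$ under the weak-dependence conditions that justify Assumption \ref{assum1}.
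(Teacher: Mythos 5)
Your reduction has a genuine gap, and it sits exactly at the point where the paper's proof works hardest. The two scalar laws of large numbers you invoke --- $T^{-1}\sum_t\|\eta_t\|^2\to_p\Sigma_{\min}$ for term $(\mathrm{I})$, and $\tr\widehat{C}_{XX}=T^{-1}\sum_t\|X_t\|^2\to_p\tr C_{XX}$ for the bias step $\sum_{j>k_T}\hat{\lambda}_j\to_p0$ --- are not hypotheses of Theorem \ref{thmblp} and are not implied by Assumption \ref{assum1}. That assumption controls operator norms only, and in infinite dimensions the trace is not continuous with respect to the operator norm: $\|\widehat{C}_{YY}-C_{YY}\|_{\infty}=O_p(T^{-1/2})$ places no constraint whatsoever on the tail eigenvalue sums of $\widehat{C}_{YY}$, and likewise for $\widehat{C}_{XX}$. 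Note in particular that your term $(\mathrm{I})$ equals $\|\Sigma(A)\|_{\SC}$, i.e., it \emph{is} the assertion of the theorem for the fixed predictor $B=A$; dispatching it "by the WLLN" assumes an instance of the very statement being proved. Appealing to "the weak-dependence conditions that underlie Assumption \ref{assum1}" is not available to you: the theorem takes Assumption \ref{assum1} itself, not any primitive ergodicity or $m$-approximability condition, as its hypothesis, so adding such conditions proves a different (weaker) theorem.

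The comparison with the paper shows why your decomposition forces these extra inputs. The paper expands $\Sigma(\widehat{A})=\widehat{C}_{YY}-\widehat{A}\widehat{C}_{XX,k_T}\widehat{A}^\ast$ and compares it with $C_{\min}=C_{YY}-AC_{XX}A^\ast$ as in \eqref{eqadd01}: on the $X$ side only operators truncated at $k_T$ are ever compared, so all trace-level discrepancies involve at most $k_T$ eigenvalues and are controlled by $k_T\|\widehat{C}_{XX}-C_{XX}\|_{\infty}=o_p(1)$, while tail sums enter only through the population quantity $\sum_{j>k_T}\lambda_j$ --- the sample tail $\sum_{j>k_T}\hat{\lambda}_j$ never appears, thanks to the cancellation $\widehat{A}\widehat{C}_{XX}\widehat{A}^\ast=\widehat{A}\widehat{C}_{XX,k_T}\widehat{A}^\ast$. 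The one unavoidable full-trace object, $\widehat{C}_{YY}$, is then handled with dedicated machinery (the Kubrusly-based Lemma \ref{lema2} together with the $E_\delta$/$F_\delta$ argument, and the eigen-perturbation bounds for $\|\widehat{C}_{XX,k_T}-C_{XX,k_T}\|_{\infty}$). Your route, by contrast, needs sample-trace convergence on \emph{both} the $Y$ and the $X$ sides. The rest of your argument is correct and in places cleaner than the paper's: the identity $\widehat{A}=A\widehat{P}_{k_T}+\widehat{C}_{X\eta}\widehat{C}_{XX,k_T}^{-1}$ with $\widehat{P}_{k_T}=\sum_{j\le k_T}\hat{v}_j\otimes\hat{v}_j$, the variance bound $\|\widehat{C}_{X\eta}\|_{\infty}(k_T\tau_T^{-1})^{1/2}=O_p(T^{\gamma_1-1/2})$, the Cauchy--Schwarz treatment of the cross term, and especially the argument that the elbow rule forces $k_T\to_p\infty$, a fact the paper uses but never argues. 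As it stands, however, the proposal establishes the theorem only under added ergodicity-type assumptions; to prove it under Assumptions \ref{assumpelbow}--\ref{assum1} alone you would need to import the paper's trace-convergence machinery for $\widehat{C}_{YY}$ and restructure the bias analysis so that, as in \eqref{eqadd01}, only population tail sums appear.
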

				As stated, an appropriate growth rate of \(k_T\), detailed in Assumption \ref{assumpelbow}, and the standard rate of convergence of the sample (cross-)covariance operators (Assumption \ref{assum1}) are all that we need in order to demonstrate that the proposed predictor in \eqref{eqpredictor} is an asymptotically OLPO. As discussed earlier, since the choice rule in Assumption \ref{assumpelbow} is practically similar to existing rules for selecting $k_T$, a naive use of the simple post-dimension reduction estimator \(\widehat{A}\) in \eqref{eqpredictor} without the usual assumptions for the unique identification and/or consistency, which are widely employed but challenging to verity, can still be justified as a way to asymptotically minimize the MSPE (see Section \ref{sec_misspecified}).  Some discussions and extensions on the above theorem are given in the next section.  We also consider the case where \(\widehat{A}\) is replaced by another estimator employing a different regularization scheme (Sections~\ref{sec_noreduction}-\ref{sec_reduction}).  
				
				\begin{remark} \normalfont \label{remadd2}
					\commRV{
				Beyond our focus on optimal linear prediction in \(\mathcal H\) under minimal conditions, one may further investigate the statistical properties of the proposed estimator $\widehat{A}$. Since the estimator is constructed using the elbow-like rule (Assumption~\ref{assumpelbow}), it can be viewed as a special case of spectral cut-off regularization (see, e.g., \citealp{Carrasco2007}). Following the approach of \cite{Benatia2017} for a related estimator based on ridge- and Tikhonov-type regularization methods, and noting that their framework can be extended to more general regularization schemes, one may, under an appropriate source condition, establish consistency of the estimator and its convergence rate, as well as provide a detailed analysis of the regularization bias. We leave this investigation for future research and gratefully acknowledge an anonymous referee for highlighting this insight.	
					}
				\end{remark}
				\section{Discussions and extensions} \label{sec_disext}
				\subsection{A more general result} \label{sec_general}
				In Assumption \ref{assum1}, we assumed that the sample (cross-)covariances converge to the population counterparts with \(\sqrt{T}\)-rate. 
				However, the exact \(\sqrt{T}\)-rate is not mandatory for 
				the desired result and can be relaxed if we make appropriate adjustments on \(\tau_T\) and \(\upsilon_T \) in Assumption \ref{assumpelbow} as follows:
				\begin{corollary} \label{cor1}
					Suppose that, for \(\beta \in (0,1/2]\), \(\|\widehat{C}_{YY}-C_{YY}\|_{\infty}\), \(\|\widehat{C}_{XX}-C_{XX}\|_{\infty}\), and \(\|\widehat{C}_{XY}-C_{XY}\|_{\infty}\) are  \(O_p(T^{-\beta})\). If Assumption \ref{assumpelbow} holds for \(\gamma_1 \in (0,\beta)\), then \(\widehat{A}\) is an asymptotically OLPO. 
				\end{corollary}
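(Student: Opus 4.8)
The plan is not to build a fresh argument but to retrace the proof of Theorem~\ref{thmblp} while keeping explicit track of how the rate of $\|\widehat C_{\bullet\bullet}-C_{\bullet\bullet}\|_\infty$ propagates, since Corollary~\ref{cor1} is exactly Theorem~\ref{thmblp} with the exponent $1/2$ replaced by $\beta$ and the companion restriction $\gamma_1<1/2$ replaced by $\gamma_1<\beta$. The convenient entry point is the algebraic simplification of the empirical MSPE. Because $\widehat C_{XX,k_T}^{-1}\widehat C_{XX}$ is the orthogonal projection onto $\spn\{\hat v_j\}_{j=1}^{k_T}$, a direct computation gives $\widehat A\widehat C_{XX}\widehat A^\ast=\widehat A\widehat C_{XY}^\ast$, whence
\[
\Sigma(\widehat A)=\widehat C_{YY}-\widehat C_{XY}\widehat C_{XX,k_T}^{-1}\widehat C_{XY}^\ast .
\]
Since $\Sigma(\widehat A)\geq 0$ its Schatten-$1$ norm equals its trace, and by the Baker factorization $C_{XY}=C_{YY}^{1/2}R_{XY}C_{XX}^{1/2}$ the target satisfies $\Sigma_{\min}=\tr C_{YY}-\|C_{YY}^{1/2}R_{XY}\|_{\SCC}^2$. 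Thus it suffices to reproduce the two limits $\tr\widehat C_{YY}\to_p\tr C_{YY}$ and $\tr(\widehat C_{XY}\widehat C_{XX,k_T}^{-1}\widehat C_{XY}^\ast)\to_p\|C_{YY}^{1/2}R_{XY}\|_{\SCC}^2$ already established in the proof of Theorem~\ref{thmblp}, the second being the only one in which the regularized inverse, and hence Assumption~\ref{assum1}, plays a role.

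The observation that makes the corollary essentially free is that Assumption~\ref{assum1} enters that second limit only through remainder terms of the form $\|\widehat C_{\bullet\bullet}-C_{\bullet\bullet}\|_\infty$ multiplied by the regularization factors generated by $\widehat C_{XX,k_T}^{-1}$, namely $\|\widehat C_{XX,k_T}^{-1}\|_\infty\leq\tau_T^{-1}$ and the rank $k_T\leq\upsilon_T^{-1}$. Under Assumption~\ref{assumpelbow} both $\tau_T^{-1}$ and $\upsilon_T^{-1}$ are $O(T^{\gamma_1})$, so each such remainder is $O_p(T^{\gamma_1-1/2})$ and vanishes precisely because $\gamma_1<1/2$. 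My plan is therefore to rerun the identical chain of bounds with $O_p(T^{-1/2})$ replaced throughout by $O_p(T^{-\beta})$: every remainder now carries the rate $O_p(T^{\gamma_1-\beta})$, which is $o_p(1)$ under the relaxed hypothesis $\gamma_1\in(0,\beta)$. The projection identity, the reduction of $\|\Sigma(\widehat A)\|_{\SC}$ to a difference of traces, the scalar law of large numbers behind $\tr\widehat C_{YY}\to_p\tr C_{YY}$, and the continuity of the relevant operator functionals are all insensitive to the numerical value of the exponent.

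The step I expect to be delicate, and the one that actually pins down the admissible range of $\gamma_1$, is the bookkeeping of how many slowly diverging factors multiply a single estimation error in the dominant remainder. The condition $\gamma_1<\beta$ is the right one only if no remainder accumulates more than a single factor of order $T^{\gamma_1}$; a term such as $\|\widehat C_{XY}-C_{XY}\|_\infty\,k_T\,\tau_T^{-1}$ would instead scale like $T^{2\gamma_1-\beta}$ and force the stronger constraint $\gamma_1<\beta/2$. I would therefore isolate the largest error contribution in the proof of Theorem~\ref{thmblp} and confirm that the elbow-like rule of Assumption~\ref{assumpelbow}, which guarantees $\hat\lambda_{k_T}>\tau_T$ and hence $\|\widehat C_{XX,k_T}^{-1}\|_\infty<\tau_T^{-1}$, keeps the regularized inverse from contributing more than one diverging factor, so that $T^{\gamma_1-\beta}$ is genuinely the worst case. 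Once this single verification is in place, $\|\Sigma(\widehat A)\|_{\SC}\to_p\Sigma_{\min}$ follows verbatim from the Theorem~\ref{thmblp} argument.
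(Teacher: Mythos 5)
Your proposal is correct and follows essentially the same route as the paper: the paper's appendix proves Theorem \ref{thmblp} and Corollary \ref{cor1} in one unified argument written from the outset for general $\beta \in (0,1/2]$ and $\gamma_1 \in (0,\beta)$, which is exactly the rate-tracking rerun you describe. Your flagged verification also checks out there: the binding remainders are $\|\widehat{C}_{\varepsilon X}\|_{\infty}^2\sum_{j=1}^{k_T}\hat{\lambda}_j^{-1} = O_p\bigl(T^{-2\beta}\tau_T^{-1}k_T\bigr) = O_p\bigl(T^{2(\gamma_1-\beta)}\bigr)$ and $k_T\|\widehat{C}_{\varepsilon X}\|_{\infty} = O_p\bigl(T^{\gamma_1-\beta}\bigr)$, so each estimation-error factor is paired with at most one diverging factor of order $T^{\gamma_1}$ and $\gamma_1<\beta$ indeed suffices.
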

				That is, \(\widehat{A}\) is an asymptotically OLPO under weaker assumptions on the sample (cross-)covariance operators if we impose stricter conditions on the decay rates of \(\tau_T\) and \(\upsilon_T\). \(\tau_T\) and \(\upsilon_T\) are user-specific choices dependent on \(T\), so researchers can easily manipulate their decay rates. Corollary \ref{cor1}, thus, tells us that we can make 
				\(\widehat{A}\) an asymptotically OLPO under more general scenarios by simply reducing the decay rates of \(\tau_T\) and \(\upsilon_T\). 
				

				\subsection{Misspecified functional linear models and OLPO} \label{sec_misspecified}
				Consider the standard functional linear model 
				\begin{align} \label{eqflm}
					Y_t = AX_t + \varepsilon_t, \quad \mathbb{E}[X_t \otimes \varepsilon_t] = 0,
				\end{align}
				where $A$ is typically assumed to satisfy the following two conditions: (i) $A$ is  Hilbert-Schmidt (i.e., $A \in \SCC$) and (ii) $A$ is uniquely identified (in $\SCC$). A common assumption employed for the unique identification is that $\ran C_{XX}$ is dense in $\mathcal H$ (see Remark \ref{rem2}). 
				Under additional technical assumptions on the eigenstructure of $C_{XX}$, such as those on the spectral gap \((\lambda_j-\lambda_{j-1})\) as in \cite{Hall2007}, the proposed estimator \(\widehat{A}\) in \eqref{eqpredictor} turns out to be consistent if \(k_T\) grows appropriately. Of course, some alternative estimators, such as the least squares estimator with Tikhonov regularization (see, e.g., \citealp{Benatia2017}), do not require any assumptions on the spectral gap, which is a well known advantage of such methods. However, they still require condition (i), the Hilbert-Schmidt property of \( A \), with some additional regularity assumptions on \( A \), and also impose assumptions for condition (ii) when discussing the consistency of those estimators. 
				
				It is crucial to have conditions (i) and (ii) together with the model \eqref{eqflm}, since a violation of either of these  can easily lead to inconsistency (see Example \ref{ex1}). 
				While these requirements are standard for estimation, they exclude many natural data generating mechanisms in $\mathcal H$ (see Examples~\ref{ex1}-\ref{ex2}).  Consequently, the model may suffer from misspecification issues. However, even in such cases, our results show that \(\widehat{A}\) attains the minimum MSPE asymptotically if \(k_T\) grows at an appropriate rate, and thus affirm the potential use of the standard post-dimension reduction estimator in practice without a careful examination of various technical conditions.   
				
				\begin{example}\label{ex2} \normalfont Similar to the example given in Section 5 of \cite{BSS2017}, suppose that \(X_t = Y_{t-1}\) and \(Y_t\) satisfies the functional AR(1) law of motion: \(Y_t=A Y_{t-1} + \varepsilon_t\)  with \(A = \sum_{j=1}^\infty a_j w_j \otimes w_j\) for some orthonormal basis \(\{w_j\}_{j\geq 1}\) and iid sequence \(\{\varepsilon_t\}_{t\in \mathbb{Z}}\) (see also \citealp{aue2015prediction}). This leads to the following pointwise AR(1) model:  \(\langle Y_{t}, w_j \rangle = a_j\langle Y_{t-1}, w_j \rangle + \langle \varepsilon_t, w_j \rangle\). This time series is guaranteed to be stationary if \(\sup_{j}|a_j|<1\); this can be demonstrated with only a slight and obvious modification of Theorem 3.1 of \cite{Bosq2000} or Proposition 3.2 of \cite{seo_2022}. On the other hand, for \(A\) to be a Hilbert-Schmidt operator, a much more stringent condition \(\sum_{j=1}^{\infty} |a_j|^2 <\infty\) is required, and, as a consequence, \(\langle Y_{t}, w_j \rangle\) and \(\langle Y_{t-1}, w_j \rangle\) need to be nearly uncorrelated for large \(j\) while they can be arbitrarily correlated under the aforementioned condition for weak stationarity. 
				\end{example}
				\subsection{Requirement of sufficient dimension reduction}\label{sec_noreduction}

				In our proof of Theorem \ref{thmblp} (resp.\ Corollary \ref{cor1}), it is crucial to have a regularized inverse of \(\widehat{C}_{XX}\), denoted as \(\widehat{C}_{XX,k_{T}}^{-1}\), whose rank $k_T$ grows at a sufficiently slower rate than $T$; see e.g.,  \eqref{eqadd4} and \eqref{eqadd5} in Appendix \ref{sec_math}.  
				Due to this requirement, our arguments for proving the main results (Theorem \ref{thmblp} and Corollary \ref{cor1}) are not straightforwardly extended to other popular estimators without dimension reduction, such as least squares-type estimators with ridge or Tikhonov regularization (see, e.g., \citealp{Benatia2017}). \revlab{r1_major3}\commRV{Importantly, these alternative estimators can still attain prediction performance on par with Theorem \ref{thmblp} and Corollary \ref{cor1}, even when the usual Hilbert–Schmidtness and unique identification assumptions on the OLPO $A$ are relaxed, but a systematic study of these methods is left for future work.} 

				
				\subsection{Dimension reduction of the target variable}\label{sec_reduction}
				The proposed estimator \(\widehat{A}\) is commonly used as a standard estimator of the functional linear model \eqref{eqflm}. Note that, in our construction of \(\widehat{A}\), the target variable \(Y_t\) is used as is, without any dimension reduction. 
				In the literature, estimators similar to \(\widehat{A}\) in \eqref{eqpredictor}, but where \(Y_t\) is replaced with its version obtained through dimension reduction, also appear to be popular and are used in practice (see e.g., \citealp{Yao2005}). As noted in recent articles, dimension reduction of the target variable not only is generally non-essential for establishing certain key asymptotic properties (such as consistency) of the estimator but may also lead to a less optimal estimator (see Remark 1 of  \citealp{imaizumi2018}). 
				
				Consider the following predictor and estimator, constructed using a version of \(Y_t\) with reduced dimension: 
				\begin{equation*}
					\widetilde{Y}_t = \widetilde{A}X_t, \quad \text{where}\quad \widetilde{A}(\cdot) = \widehat{\Pi}_{Y,\ell_T} \widehat{C}_{XY}\widehat{C}_{XX,k_{T}}^{-1}(\cdot) = \frac{1}{T} \sum_{t=1}^T \sum_{j=1}^{k_T} \hat{\lambda}_j^{-1} \langle \hat{v}_j,\cdot \rangle \langle \hat{v}_j,X_t \rangle \widehat{\Pi}_{Y,\ell_T} Y_t
				\end{equation*}
				and $\widehat{\Pi}_{Y,\ell_T} = \sum_{j=1}^{\ell_T} \hat{w}_j \otimes \hat{w}_j$ 
				for some orthonormal basis \(\{\hat{w}_j\}_{j\geq 1}\) and  \(\ell_T\) growing as $T$ increases; often, \(\hat{w}_j\) is set to the eigenvector of \(\widehat{C}_{YY}\) or  \(\widehat{C}_{XX}\) corresponding to the \(j\)-th largest eigenvalue, but our subsequent analysis is not restricted to these specific cases. 
				Even if the additional dimension reduction applied to \(Y_t\) introduces some complications in our theoretical analysis, it can also be shown that \(\widetilde{A}\) is an asymptotically OLPO under an additional mild condition. 
				\begin{corollary}\label{cor2}
					Suppose that Assumptions \ref{assumpelbow}-\ref{assum1} hold and there exists a sequence $m_T$ tending to infinity as $T \to \infty$ such that  $m_T \|\widehat{\Pi}_{Y,\ell_T} C_{YY}^{1/2} - C_{YY}^{1/2}\|_{\infty} \to_p 0$ and $m_T \leq \ell_T$ eventually. Then, $\widetilde{A}$ is an asymptotically OLPO. 
				\end{corollary}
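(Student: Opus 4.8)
The plan is to show that $\widetilde{A}$ differs from the asymptotically optimal $\widehat{A}$ of Theorem \ref{thmblp} only negligibly in empirical MSPE, so that $\|\Sigma(\widetilde A)\|_{\SC}$ inherits the limit $\Sigma_{\min}$. Writing $\widehat Y_t=\widehat A X_t$ and noting that $\widetilde A X_t=\widehat\Pi_{Y,\ell_T}\widehat Y_t$, we have $Y_t-\widetilde A X_t=(Y_t-\widehat Y_t)+(I-\widehat\Pi_{Y,\ell_T})\widehat Y_t$. Expanding the square and averaging gives
\begin{equation*}
\|\Sigma(\widetilde A)\|_{\SC}-\|\Sigma(\widehat A)\|_{\SC}=\frac{2}{T}\sum_{t=1}^T\langle Y_t-\widehat Y_t,(I-\widehat\Pi_{Y,\ell_T})\widehat Y_t\rangle+\Delta_T,\qquad \Delta_T:=\frac1T\sum_{t=1}^T\|(I-\widehat\Pi_{Y,\ell_T})\widehat Y_t\|^2 .
\end{equation*}
By Cauchy--Schwarz the cross term is at most $2\|\Sigma(\widehat A)\|_{\SC}^{1/2}\Delta_T^{1/2}$, and $\|\Sigma(\widehat A)\|_{\SC}\to_p\Sigma_{\min}=O_p(1)$ by Theorem \ref{thmblp}. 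Hence the whole problem reduces to proving $\Delta_T\to_p0$; observing that $T^{-1}\sum_t\|MX_t\|^2=\|M\widehat C_{XX}^{1/2}\|_{\SCC}^2$ for any $M\in\mathcal L_\infty$, this is the same as $\|(I-\widehat\Pi_{Y,\ell_T})\widehat A\widehat C_{XX}^{1/2}\|_{\SCC}\to_p0$.

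Next I would strip $\widehat A\widehat C_{XX}^{1/2}$ down to a trace-class object. Applying Proposition \ref{prop1} to the empirical measure furnishes a sample correlation operator $\widehat R_{XY}$ with $\|\widehat R_{XY}\|_\infty\le1$ (Baker's factorization yields a contraction) and the identity $\|\Sigma(\widehat A)\|_{\SC}=\widehat\Sigma_{\min}+\|\widehat A\widehat C_{XX}^{1/2}-\widehat C_{YY}^{1/2}\widehat R_{XY}\|_{\SCC}^2$, where $\widehat\Sigma_{\min}$ denotes the sample minimal value. Since the argument establishing Theorem \ref{thmblp} shows both $\|\Sigma(\widehat A)\|_{\SC}\to_p\Sigma_{\min}$ and $\widehat\Sigma_{\min}\to_p\Sigma_{\min}$, we obtain $\|\widehat A\widehat C_{XX}^{1/2}-\widehat C_{YY}^{1/2}\widehat R_{XY}\|_{\SCC}\to_p0$. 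Consequently,
\begin{equation*}
\Delta_T^{1/2}\le\|\widehat A\widehat C_{XX}^{1/2}-\widehat C_{YY}^{1/2}\widehat R_{XY}\|_{\SCC}+\|(I-\widehat\Pi_{Y,\ell_T})\widehat C_{YY}^{1/2}\widehat R_{XY}\|_{\SCC}\le o_p(1)+\|(I-\widehat\Pi_{Y,\ell_T})\widehat C_{YY}^{1/2}\|_{\SCC},
\end{equation*}
using $\|I-\widehat\Pi_{Y,\ell_T}\|_\infty\le1$, $\|\widehat R_{XY}\|_\infty\le1$, and submultiplicativity of the Schatten norms. It therefore remains to show $\|(I-\widehat\Pi_{Y,\ell_T})\widehat C_{YY}^{1/2}\|_{\SCC}\to_p0$.

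This last step is the crux, and the place where the hypothesis on $\widehat\Pi_{Y,\ell_T}$ and the trace-class structure of $C_{YY}$ enter. The difficulty is that the assumption controls $\widehat\Pi_{Y,\ell_T}$ against $C_{YY}^{1/2}$ only in operator norm, whereas we need Hilbert--Schmidt convergence, and operator-norm convergence to zero does not by itself imply $\SCC$-convergence. I would split along $\widehat P_N$, the spectral projection of $\widehat C_{YY}$ onto its $N$ largest eigenvalues (with eigenvalues $\hat\kappa_j$):
\begin{equation*}
\|(I-\widehat\Pi_{Y,\ell_T})\widehat C_{YY}^{1/2}\|_{\SCC}\le\sqrt N\,\|(I-\widehat\Pi_{Y,\ell_T})\widehat C_{YY}^{1/2}\|_\infty+\Big(\textstyle\sum_{j>N}\hat\kappa_j\Big)^{1/2}.
\end{equation*}
For the head, $\|(I-\widehat\Pi_{Y,\ell_T})\widehat C_{YY}^{1/2}\|_\infty\le\|(I-\widehat\Pi_{Y,\ell_T})C_{YY}^{1/2}\|_\infty+\|\widehat C_{YY}^{1/2}-C_{YY}^{1/2}\|_\infty$; the first summand is $o_p(m_T^{-1})=o_p(1)$ by the added assumption, while the second is $O_p(T^{-1/4})$ by the $1/2$-Hölder continuity of the operator square root together with Assumption \ref{assum1}. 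Thus for each fixed $N$ the head vanishes in probability. For the tail, $\hat\kappa_j\to_p\kappa_j$ for each $j$ (Weyl's inequality and Assumption \ref{assum1}) and $\tr\widehat C_{YY}=T^{-1}\sum_t\|Y_t\|^2\to_p\tr C_{YY}=\sum_j\kappa_j$ by the scalar law of large numbers, so $\sum_{j>N}\hat\kappa_j\to_p\sum_{j>N}\kappa_j$, which is arbitrarily small for large $N$ because $C_{YY}$ is trace class. A two-step $\varepsilon$-argument (fix $N$ large to control the tail, then let $T\to\infty$ to kill the head) yields $\|(I-\widehat\Pi_{Y,\ell_T})\widehat C_{YY}^{1/2}\|_{\SCC}\to_p0$, completing the proof. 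The quantitative rate $m_T\to\infty$ is what secures the operator-norm convergence needed for the head and, together with $m_T\le\ell_T$, leaves room to let $N=N_T$ grow with $T$ should a sharper truncation be desired.
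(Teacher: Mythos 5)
Your opening reduction is sound: writing $Y_t-\widetilde{A}X_t=(Y_t-\widehat{A}X_t)+(I-\widehat{\Pi}_{Y,\ell_T})\widehat{A}X_t$, invoking Theorem \ref{thmblp} and Cauchy--Schwarz, and reducing everything to $\Delta_T=\|(I-\widehat{\Pi}_{Y,\ell_T})\widehat{A}\widehat{C}_{XX}^{1/2}\|_{\SCC}^2\to_p0$ is a correct and clean start, and your head--tail splitting at the end is the right kind of device (it plays the role of the paper's Lemma \ref{lema2}). However, the bridge between these two pieces contains a genuine error. You claim that the sample Baker factorization $\widehat{C}_{XY}=\widehat{C}_{YY}^{1/2}\widehat{R}_{XY}\widehat{C}_{XX}^{1/2}$ satisfies $\widehat{\Sigma}_{\min}\to_p\Sigma_{\min}$, and deduce $\|\widehat{A}\widehat{C}_{XX}^{1/2}-\widehat{C}_{YY}^{1/2}\widehat{R}_{XY}\|_{\SCC}\to_p0$. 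This is false in precisely the regime the paper is concerned with. When $X_t$ is genuinely infinite dimensional, $X_1,\dots,X_T$ are typically linearly independent almost surely, so there exists a bounded finite-rank operator $B$ with $BX_t=Y_t$ for every $t\le T$ (define it by interpolation on $\spn\{X_1,\dots,X_T\}$ and as $0$ on the orthogonal complement). Applying the empirical version of Proposition \ref{prop1} to this $B$ makes the left side zero while both right-side terms are nonnegative, which forces $\widehat{\Sigma}_{\min}=0$ for every $T$; meanwhile $\Sigma_{\min}>0$ in general. Your own identity then gives $\|\widehat{A}\widehat{C}_{XX}^{1/2}-\widehat{C}_{YY}^{1/2}\widehat{R}_{XY}\|_{\SCC}^2=\|\Sigma(\widehat{A})\|_{\SC}\to_p\Sigma_{\min}$, i.e., this quantity is bounded \emph{away} from zero --- the opposite of what your argument needs. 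This is exactly the overfitting phenomenon that motivates the regularization: the sample-level optimal predictor interpolates the data and carries no information about the population optimum, so no argument ``establishing Theorem \ref{thmblp}'' can yield $\widehat{\Sigma}_{\min}\to_p\Sigma_{\min}$.

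The gap is repairable, and the repair lands you close to the paper's actual proof. Compare $\widehat{A}\widehat{C}_{XX}^{1/2}$ with the \emph{population} factor rather than the sample one: since $\widehat{C}_{XX,k_T}^{-1}\widehat{C}_{XX}\widehat{C}_{XX,k_T}^{-1}=\widehat{C}_{XX,k_T}^{-1}$, one has $\Delta_T=\tr\bigl((I-\widehat{\Pi}_{Y,\ell_T})\widehat{A}\widehat{C}_{XX,k_T}\widehat{A}^\ast(I-\widehat{\Pi}_{Y,\ell_T})\bigr)$, and the proof of Theorem \ref{thmblp} already establishes $\|\widehat{A}\widehat{C}_{XX,k_T}\widehat{A}^\ast-AC_{XX}A^\ast\|_{\SC}\to_p0$ with $AC_{XX}A^\ast=\Upsilon\Upsilon^\ast$, $\Upsilon=C_{YY}^{1/2}R_{XY}$. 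Hence $\Delta_T\le\|(I-\widehat{\Pi}_{Y,\ell_T})\Upsilon\|_{\SCC}^2+o_p(1)\le\|(I-\widehat{\Pi}_{Y,\ell_T})C_{YY}^{1/2}\|_{\SCC}^2+o_p(1)$, and your head--tail split applied to the \emph{deterministic} trace-class $C_{YY}$ (take the head of rank $m_T^2$, so the head term is $m_T\|(I-\widehat{\Pi}_{Y,\ell_T})C_{YY}^{1/2}\|_{\infty}\to_p0$ by hypothesis and the tail $\sum_{j>m_T^2}\kappa_j\to0$ deterministically) finishes the proof. Working with $C_{YY}$ instead of $\widehat{C}_{YY}$ also removes your reliance on $\tr\widehat{C}_{YY}\to_p\tr C_{YY}$, which is a scalar LLN not implied by the operator-norm rates in Assumption \ref{assum1}. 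For comparison, the paper's proof keeps the whole argument at the level of trace-norm convergence of $\widehat{\Pi}_{Y,\ell_T}\Upsilon\Upsilon^\ast\widehat{\Pi}_{Y,\ell_T}$ to $\Upsilon\Upsilon^\ast$, using Lemma \ref{lema2} together with the condition $m_T\le\ell_T$ --- which is where that hypothesis actually gets used.
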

				Given that \(\widehat{\Pi}_{Y,\ell_T}\) is the orthogonal projection with a growing rank, the condition given in Corollary \ref{cor2} becomes easier to be satisfied if \(\ell_T\) grows more rapidly. In this sense,, the scenario in Theorem \ref{thmblp} can be seen as the limiting case where \(\widehat{\Pi}_{Y,\ell_T} = I\), indicating no dimension reduction applied to \(Y_t\). Corollary \ref{cor2} implies that estimators obtained by reducing the dimensionality of \(Y_t\) tend to be asymptotically OLPOs under non-restrictive conditions. Given that  \(C_{YY}^{1/2}\)and \(C_{YY}\) share the same eigenvectors, one may conjecture that satisfying the requirement in Corollary \ref{cor2} could be easier if \(\widehat{w}_j\) is an eigenvector of \(\widehat{C}_{YY}\). Theoretical justification of this may require further assumptions on the eigenstructure of $C_{YY}$. Given the focus on optimal linear prediction in \(\mathcal H\) under minimal conditions, we do not pursue this direction and leave it for future study.

					\section{Simulation} \label{secest2} 
					We provide simulation evidence for our theoretical findings, focusing on cases that have not been sufficiently explored in the literature and where consistent estimation of the OLPO is impossible. In all simulation experiments, the number of replications is 1000. 
					
					Let \(\{f_j\}_{j \geq 1}\) be the Fourier basis of \(L^2[0,1]\),  the Hilbert space of square-integrable functions on \([0,1]\), 
					i.e., for \(x \in [0,1]\),  \(f_1(x) = 1\), and for \(j\geq 2\),   \(f_j = \sqrt{2}\sin(2\pi jx)\) if \(j\) is even, and \(f_j = \sqrt{2}\cos(2\pi jx)\) if \(j\) is odd.    We define  \(X_t\) and \(Y_t\) as follows: for some real numbers \(\{a_j\}_{j=1}^{101}\),
					\begin{equation*} 
						Y_t = A X_t + \varepsilon_t, \quad X_t = \sum_{j=1}^{101} a_j \langle X_{t-1}, f_j \rangle f_j + e_t, 
					\end{equation*}
					where \(\{e_t\}_{t\geq 1}\) and \(\{\varepsilon_t\}_{t\geq 1}\) are assumed to be mutually and serially independent sequences of random elements.  In this simulation setup, the minimal MSPE that can be achieved by a linear operator equals the Schatten 1-norm (trace norm) of the covariance operator \(C_{\varepsilon}\) of \(\varepsilon_t\). 
					
					We conducted experiments in three different cases. In the first case (referred to as Case BB), \(e_t\)  and  \(\varepsilon_t\) are set to independent realizations of the standard Brownian bridge, and in the second case (referred to as Case CBM), they are set to independent realizations of the centered Brownian motion. In these first two cases, the \(j\)-th largest eigenvalue of \(C_{\varepsilon}\) is given by \(\pi^{-2}j^{-2}\); see \citet[p.\ 86]{JaimezBonnet} and \citet[p.\ 1465]{karol2008small}. From a well known result on the Riemann zeta function (see e.g., \citealp{kalman2012another}),  we find that these eigenvalues add up to \(1/6\), which is the minimal achievable MSPE. In the last case (referred to as Case BM), \(e_t\) and \(\varepsilon_t\) are set as realizations of the standard Brownian motion multiplied by a constant, which is  chosen to ensure that the minimal MSPE in this scenario matches that in the previous two cases. 
					
					
					We will subsequently consider the following four models, depending on the specification of \(a_j\) and \(A\) for generating \(X_t\) and \(Y_t\): {for all $j\geq 1$},
					\begin{align*}
						\text{Model\,1: }& a_j\sim_{\text{iid}} U(-0.1,0.25) \,\,  \text{and} \,\,  A f_j = b_0 f_j,\\
						\text{Model\,2: }& a_j\sim_{\text{iid}} U(-0.1,0.25) \,\,  \text{and} \,\, A f_j = b_j f_j 1\{j\leq 100\} +  f_j 1\{j > 100\},   \\
						\text{Model\,3: }& a_j \sim_{\text{iid}} U(-0.1,0.75) \,\,  \text{and} \,\,  A f_j = b_0 f_j, \\
						\text{Model\,4: }& a_j \sim_{\text{iid}} U(-0.1,0.75) \,\,  \text{and} \,\,  A f_j = b_j f_j 1\{j\leq 100\} +  f_j 1\{j > 100\},
					\end{align*}
					where \(b_0 \sim U[-2.5,2.5]\) and \(b_j \sim_{\text{iid}} U[-2.5,2.5]\) for $j \geq 1$. Note that the parameters \(a_j\), \(b_0\) and \(b_j\) are generated differently in each simulation run; this allows us to assess  the average performance of the proposed predictor across various parameter choices. In many empirical examples involving dependent sequences of functions \(X_t\), it is often expected that \(\langle X_t,v \rangle\) for any \(v \in \mathcal H\) exhibits a positive lag-one autocorrelation, so we let \(a_j\) tend to take positive values more frequently in our simulation settings. In any of the above cases, \(A\) is non-compact and hence cannot be consistently estimated without prior knowledge on the structure of \(\{A f_j\}_{j \geq 1}\), which is as in the operator considered in Example \ref{ex1}.  		
					
					We set \(\tau_T = 0.01 \|\widehat{C}_{XX}\|_{\SC} T^{\gamma}\) and \(\upsilon_T = 0.5 T^{\gamma}\) for some \(\gamma > 0\), where note that \(\tau_T\) is designed to reflect the scale of $X_t$, as proposed by \cite{seong2021functional} in a similar context. We then computed the empirical MSPE associated with \(\widehat{A}\), introduced in \eqref{eqpredictor}. Table \ref{tab1} reports the excess MSPE (the empirical MSPE minus $1/6$) for each of the considered cases. 
					As expected from our main theoretical results, the excess MSPE associated with \(\widehat{A}\) approaches zero as the sample size \(T\) increases, even if \(\widehat{A}\) is not consistent. Notably, in Case BM, the excess MSPE tends to be significantly smaller than in the other two cases (Case BB and Case CMB); even with a moderately large number of observations, the empirical MSPE in Case BM tends to be close to the minimal MSPE. This suggests that the performance of the proposed predictor significantly depends on the specification of \(\varepsilon_t\). Overall, the simulation results reported in Table \ref{tab1} support our theoretical finding in Section \ref{secest1}. We also experimented with an alternative estimator \(\widetilde{A}\) which is introduced in Section \ref{sec_reduction} and obtained qualitatively similar supporting evidence; some of the simulation results are reported in Appendix \ref{sec_addmonte} of the appendix. 
					
					\begin{remark}\label{remrevadd}\normalfont 
			\revlab{r1_major2}	\commRV{In the  considered simulation design, where \( X_t \) is not generated from a PCA-like structure, the proposed estimator in this paper may not perform as well as alternative methods based on soft thresholding methods such as ridge or Tikhonov regularization (see, e.g., \citealp{Carrasco2012}; \citealp{Florence2015}; \citealp{Benatia2017}; \citealp{babii2022high}), which may give better finite-sample properties. As discussed in Section~\ref{sec_noreduction}, our approach does not extend to these alternative estimators, which may be more attractive in certain contexts. This points to a direction for future research on this topic. We are grateful to an anonymous referee for bringing this insight to our attention.} 
				     \end{remark}

					\begin{table}[t!] 
						\small
						\caption{Excess MSPE of the proposed predictor}  \label{tab1}
						\renewcommand*{\arraystretch}{1} \vspace{-0.1em}
						\begin{minipage}{\linewidth}
							\centering  
							\subcaptionbox{Case BB	\vspace{-0.55em}	}{%
								\begin{tabular*}	{1\linewidth}{@{\extracolsep{\fill}}c@{\hspace{1\tabcolsep}}| ccccc@{\hspace{1\tabcolsep}}|ccccc}
									\hline\hline
									& &\multicolumn{3}{c}{$\gamma= 0.475$}& & &\multicolumn{3}{c}{$\gamma= 0.45$}&\\ \hline 
									\hspace{-0.25cm}	$T$ & $50$ & $100$ & $200$ &{$400$} & {$800$} & $50$ & $100$ & $200$ &{$400$} & {$800$} \\
									\hline 
									\hspace{-0.0cm}	Model 1&0.043& 0.035& 0.023& 0.018& 0.012& 0.072& 0.051& 0.031& 0.022& 0.014 \\
									\hspace{-0.0cm}	Model 2 & 0.041& 0.033& 0.022& 0.017& 0.011&   0.068 &0.049& 0.029 &0.021 &0.013 \\
									\hspace{-0.0cm}	Model 3 & 0.056& 0.046& 0.031& 0.023 &0.016& 0.094 &0.066& 0.040& 0.028& 0.019 \\
									\hspace{-0.0cm}	Model 4 & 0.054& 0.044 &0.029& 0.022 &0.015& 0.089& 0.063 &0.038& 0.027& 0.018 \\
									\hline\hline 
								\end{tabular*}
							}
						\end{minipage} \\[0.4em]
						
						\begin{minipage}{\textwidth}
							\centering
							\subcaptionbox{Case CBM \vspace{-0.55em}	}{%
								\begin{tabular*}{1\linewidth}{@{\extracolsep{\fill}}c@{\hspace{1\tabcolsep}}| ccccc@{\hspace{1\tabcolsep}}|ccccc}
									\hline\hline
									& &\multicolumn{3}{c}{$\gamma= 0.475$}& & &\multicolumn{3}{c}{$\gamma= 0.45$}&\\ \hline 
									\hspace{-0.25cm}	$T$ & $50$ & $100$ & $200$ &{$400$} & {$800$} & $50$ & $100$ & $200$ &{$400$} & {$800$} \\
									\hline 
									\hspace{-0.0cm}	Model 1& 0.045& 0.036& 0.024& 0.018& 0.012 & 0.074 &0.052& 0.031& 0.022 &0.014 \\
									\hspace{-0.0cm}	Model 2 & 0.041& 0.034& 0.022 &0.017 &0.011&   0.069& 0.049& 0.029& 0.020& 0.013 \\
									\hspace{-0.0cm}	Model 3 &  0.059& 0.048& 0.032& 0.024& 0.016 &0.098& 0.068& 0.041& 0.029& 0.019 \\
									\hspace{-0.0cm}	Model 4 &0.055 &0.045& 0.030& 0.022 &0.015&0.091 &0.064 &0.038& 0.027 &0.018 \\
									\hline\hline
								\end{tabular*}
							}
						\end{minipage}\\[0.4em]
						
						\begin{minipage}{\textwidth}
							\centering
							\subcaptionbox{Case BM \vspace{-0.55em}	}{%
								\begin{tabular*}{1\linewidth}{@{\extracolsep{\fill}}c@{\hspace{1\tabcolsep}}| ccccc@{\hspace{1\tabcolsep}}|ccccc}
									\hline\hline
									& &\multicolumn{3}{c}{$\gamma= 0.475$}& & &\multicolumn{3}{c}{$\gamma= 0.45$}&\\ \hline 
									\hspace{-0.25cm}		$T$ & $50$ & $100$ & $200$ &{$400$} & {$800$} & $50$ & $100$ & $200$ &{$400$} & {$800$} \\
									\hline 
									\hspace{-0.0cm}	Model 1&  0.011 &0.010& 0.006& 0.004& 0.004& 0.026& 0.017& 0.009& 0.006& 0.004 \\
									\hspace{-0.0cm}	Model 2 & 0.012 &0.010 &0.006& 0.004& 0.004& 0.027 &0.017 &0.009 &0.006 &0.004 \\
									\hspace{-0.0cm}	Model 3 & 0.018& 0.014& 0.009& 0.006& 0.005& 0.037& 0.024& 0.013& 0.008& 0.006 \\
									\hspace{-0.0cm}	Model 4 &0.018& 0.015& 0.009 &0.006& 0.005& 0.038& 0.024& 0.013& 0.008& 0.006 \\
									\hline\hline 
								\end{tabular*}
							}
						\end{minipage}
						
						\vspace{0.4em}
						\begin{minipage}{1\textwidth}
							{\footnotesize Notes: The excess MSPE is calculated as the empirical MSPE minus \(1/6\), where \(1/6\) represents the minimal achievable MSPE by a linear predictor. \(\tau_T = 0.01 \|\widehat{C}_{XX}\|_{\SC} T^{\gamma}\) and \(\upsilon_T = 0.5 T^{\gamma}\) for \(\gamma \in  \{0.45,0.475\}\). The reported MSPEs are approximately computed by (i) generating $X_t$ and $Y_t$ on a fine grid of $[0,1]$ with 200 equally spaced grid points, and then (ii) representing those with 100 cubic B-spline functions. The results exhibit little change with varying numbers of grid points and B-spline functions.} 
						\end{minipage}
						
					\end{table}

					\section{Concluding remarks} \label{secconclusion}	
					This paper studies linear prediction in a Hilbert space, demonstrating that, under mild conditions, the empirical MSPEs associated with standard post-dimension reduction estimators approach the minimal achievable MSPE. There is ample room for future research; for example, it would be intriguing to explore whether similar prediction results can be obtained from various alternatives or modifications of the simple post-dimension reduction estimators considered in the literature.
					
					\newpage 
					\appendix  
					\section*{Appendix}
					\section{Mathematical proofs}\label{sec_math}
					\subsection{Useful lemmas}
					\begin{lemma} \label{lema1}
						Let \(\Gamma\) be a nonnegative self-adjoint Schatten 1-class  operator. For any \(D \in \mathcal L_{\infty}\), the following hold.
						\begin{enumerate}[(i)]
							\item \label{lema1a}  $\Gamma^{1/2}DD^\ast \Gamma^{1/2}$ and $D \Gamma D^\ast$ are Schatten 1-class  operators and their Schatten 1-norms are bounded above by $\|D\|_{\infty}^2 \|\Gamma\|_{\SC}$.
							\item \label{lema1b} $\Gamma^{1/2}D$ and $D^\ast \Gamma^{1/2}$ are Schatten 2-class operators.
						\end{enumerate}
					\end{lemma}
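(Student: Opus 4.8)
The plan is to reduce both parts to three standard facts about Schatten classes: the ideal inequalities $\|\mathcal S \mathcal T\|_{\SCC} \le \|\mathcal S\|_{\infty}\|\mathcal T\|_{\SCC}$ and $\|\mathcal S \mathcal T\|_{\SCC} \le \|\mathcal S\|_{\SCC}\|\mathcal T\|_{\infty}$, the H\"older-type bound $\|\mathcal S\mathcal T\|_{\SC} \le \|\mathcal S\|_{\SCC}\|\mathcal T\|_{\SCC}$, and the invariances $\|\mathcal T^\ast\|_{\SCC}=\|\mathcal T\|_{\SCC}$ and $\|\mathcal T^\ast\|_{\infty}=\|\mathcal T\|_{\infty}$. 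The one genuinely load-bearing observation is that $\Gamma^{1/2}$ is Hilbert--Schmidt. Writing $\Gamma = \sum_{j\ge 1}\gamma_j e_j \otimes e_j$ with $\gamma_j \ge 0$ (available since $\Gamma$ is compact, self-adjoint and nonnegative), we have $\Gamma^{1/2} = \sum_{j\ge1}\gamma_j^{1/2} e_j \otimes e_j$, so the formula $\|\mathcal T\|_{\SCC}^2 = \sum_j \|\mathcal T w_j\|^2$ gives $\|\Gamma^{1/2}\|_{\SCC}^2 = \sum_{j\ge1}\gamma_j = \|\Gamma\|_{\SC} < \infty$. This identity is exactly what converts the trace-class control on $\Gamma$ into the constant $\|D\|_{\infty}^2\|\Gamma\|_{\SC}$ appearing in the statement.

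I would prove \eqref{lema1b} first, as it is needed for \eqref{lema1a}. Since $\Gamma^{1/2}\in\SCC$ and $D\in\mathcal L_{\infty}$, the ideal inequality gives $\|\Gamma^{1/2}D\|_{\SCC}\le\|\Gamma^{1/2}\|_{\SCC}\|D\|_{\infty}<\infty$, and similarly $\|D^\ast\Gamma^{1/2}\|_{\SCC}\le\|D^\ast\|_{\infty}\|\Gamma^{1/2}\|_{\SCC}=\|D\|_{\infty}\|\Gamma^{1/2}\|_{\SCC}<\infty$. This establishes that both operators belong to $\SCC$.

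For \eqref{lema1a} I would use the self-adjointness of $\Gamma^{1/2}$ to factor each operator as a Hilbert--Schmidt operator times its own adjoint. Indeed $(\Gamma^{1/2}D)^\ast = D^\ast\Gamma^{1/2}$ gives $\Gamma^{1/2}DD^\ast\Gamma^{1/2} = (\Gamma^{1/2}D)(\Gamma^{1/2}D)^\ast$, and $\Gamma^{1/2}\Gamma^{1/2}=\Gamma$ together with $(D\Gamma^{1/2})^\ast=\Gamma^{1/2}D^\ast$ gives $D\Gamma D^\ast = (D\Gamma^{1/2})(D\Gamma^{1/2})^\ast$. Applying the H\"older bound to each product, followed by the ideal inequality on the Hilbert--Schmidt factor and the invariance $\|\mathcal T^\ast\|_{\SCC}=\|\mathcal T\|_{\SCC}$, yields $\|\Gamma^{1/2}DD^\ast\Gamma^{1/2}\|_{\SC}\le\|\Gamma^{1/2}D\|_{\SCC}^2\le\|D\|_{\infty}^2\|\Gamma^{1/2}\|_{\SCC}^2$ and, identically, $\|D\Gamma D^\ast\|_{\SC}\le\|D\Gamma^{1/2}\|_{\SCC}^2\le\|D\|_{\infty}^2\|\Gamma^{1/2}\|_{\SCC}^2$. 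Substituting $\|\Gamma^{1/2}\|_{\SCC}^2=\|\Gamma\|_{\SC}$ produces the claimed bound $\|D\|_{\infty}^2\|\Gamma\|_{\SC}$, and finiteness of the Schatten 1-norm is precisely the assertion that these operators lie in $\SC$.

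There is no substantive obstacle here; the argument is a bookkeeping exercise in Schatten-norm inequalities. The only points requiring care are confirming the adjoint identities $(\Gamma^{1/2}D)^\ast=D^\ast\Gamma^{1/2}$ and $(D\Gamma^{1/2})^\ast=\Gamma^{1/2}D^\ast$ (which use only that $\Gamma^{1/2}$ is self-adjoint), and being explicit about which standard inequalities are invoked. If a self-contained treatment is preferred, the H\"older and ideal inequalities can be derived directly from the defining formula $\|\mathcal T\|_{\SCC}^2=\sum_j\|\mathcal T w_j\|^2$ and the Cauchy--Schwarz inequality; I would, however, simply cite them as known.
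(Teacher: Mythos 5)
Your proposal is correct, and it proves slightly more than the paper does explicitly: the paper handles $\Gamma^{1/2}DD^\ast\Gamma^{1/2}$ by a direct diagonal computation and simply cites \citet[p.\ 267]{Conway1994} for the bound on $\|D\Gamma D^\ast\|_{\SC}$, whereas your argument covers both operators by one uniform mechanism. The paper's route for part (i) is the eigenbasis trace computation: writing $\Gamma=\sum_j c_j w_j\otimes w_j$ and padding $\{w_j\}$ to an orthonormal basis, it observes $\langle \Gamma^{1/2}DD^\ast\Gamma^{1/2}w_j,w_j\rangle = c_j\langle w_j,DD^\ast w_j\rangle \le c_j\|D\|_\infty^2$ and sums; this implicitly uses that $\Gamma^{1/2}DD^\ast\Gamma^{1/2}$ is nonnegative self-adjoint, so that finiteness of the diagonal sum gives trace-class membership and the trace norm equals the trace. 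Your route instead proves (ii) first, records the identity $\|\Gamma^{1/2}\|_{\SCC}^2=\|\Gamma\|_{\SC}$ (which is also the engine of the paper's (ii)), and then in (i) factors each operator as $SS^\ast$ with $S\in\SCC$ — namely $\Gamma^{1/2}DD^\ast\Gamma^{1/2}=(\Gamma^{1/2}D)(\Gamma^{1/2}D)^\ast$ and $D\Gamma D^\ast=(D\Gamma^{1/2})(D\Gamma^{1/2})^\ast$ — and applies the H\"older bound $\|\mathcal S\mathcal T\|_{\SC}\le\|\mathcal S\|_{\SCC}\|\mathcal T\|_{\SCC}$ together with the ideal inequality. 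What your approach buys is uniformity and self-containedness: no appeal to nonnegativity of the product operator, no external citation, and both claimed bounds come out of the same two standard inequalities; the logical order is reversed (your (i) depends on (ii), the paper's does not). What the paper's computation buys is that it is more elementary at the level of ingredients — it needs only the orthonormal-basis formula for the trace, not the Schatten--H\"older inequality. One cosmetic point: when you invoke $\|\mathcal T\|_{\SCC}^2=\sum_j\|\mathcal Tw_j\|^2$ to get $\|\Gamma^{1/2}\|_{\SCC}^2=\sum_j\gamma_j$, you should, as the paper does, note that the eigenvector system $\{e_j\}$ may be completed to an orthonormal basis by adjoining vectors from $\ker\Gamma$; this costs nothing since $\Gamma^{1/2}$ vanishes there.
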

					\begin{proof} 
						Let $\Gamma = \sum_{j=1}^\infty c_j w_j \otimes w_j$, where $c_1 \geq c_2\geq \ldots \geq 0$ and $c_j$ may be zero. By allocating a proper vector to each zero eigenvalue, we may assume that $\{w_j\}_{j\geq 1}$ is an orthonormal basis of $\mathcal H$. To show (i), we note that $\langle \Gamma^{1/2}DD^\ast \Gamma^{1/2} w_j,w_j\rangle = c_j \langle w_j, DD^\ast w_j \rangle \leq c_j \|D\|^2_{\infty}$, from which 	$\|\Gamma^{1/2}DD^\ast \Gamma^{1/2}\|_{\SC}\leq \|D\|_{\infty}^2 \|\Gamma\|_{\SC}$ is established. The desired result for $\|D\Gamma D^\ast\|_{\SC}$ is already well known, see e.g., \citet[p.\ 267]{Conway1994}. To show (ii), we observe that $\| \Gamma^{1/2}D\|_{\SCC}^2=\|D^\ast \Gamma^{1/2}\|_{\SCC}^2 \leq 
						\|D\|^2_{\infty} \sum_{j=1}^{\infty} \|\Gamma^{1/2}w_j\|^2 =\|D\|^2_{\infty} \sum_{j=1}^{\infty} c_j < \infty$, which establishes the desired result.
					\end{proof}
					\begin{lemma} \label{lema2}
						Let $\{\Gamma_j\}_{j\geq 1}$ be a sequence of random self-adjoint Schatten 1-class operators and let $\Gamma$ be a self-adjoint Schatten 1-class operator. Then, for any orthonormal basis $\{w_j\}_{j\geq 1}$ of $\mathcal H$ and $m_T\geq 0$, \begin{equation} \label{eqlem01}\|\Gamma_j-\Gamma\|_{\SC} = O_p(\|\Gamma_j\|_{\SC} - \|\Gamma\|_{\SC}) + O_p\left(m_T\|\Gamma_j-\Gamma\|_{\infty} + \sum_{j=m_T+1}^\infty \langle \Gamma w_j, w_j \rangle\right).
						\end{equation} 
						Moreover, $\|\Gamma_j-\Gamma\|_{\SC} \to_p 0$ if $\|\Gamma_j\|_{\SC} - \|\Gamma\|_{\SC}\to_p 0$ and $m_T\|\Gamma_j-\Gamma\|_{\infty} \to_p 0$ as $m_T\to \infty$ and $T\to \infty$.
					\end{lemma}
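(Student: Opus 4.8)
The plan is to control $\|\Gamma_j-\Gamma\|_{\SC}$ by splitting the difference, relative to the truncation level $m_T$, into a finite-rank part handled through the operator norm and a tail part handled through the gap in trace norms. Fix $m_T$ and let $P=\sum_{i=1}^{m_T}w_i\otimes w_i$ be the orthogonal projection onto $\spn\{w_1,\dots,w_{m_T}\}$, with $Q=I-P$. Writing $\Gamma_j-\Gamma=P(\Gamma_j-\Gamma)P+P(\Gamma_j-\Gamma)Q+Q(\Gamma_j-\Gamma)P+Q(\Gamma_j-\Gamma)Q$ and using the triangle inequality for $\|\cdot\|_{\SC}$ reduces the task to bounding these four blocks separately.

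The three blocks that touch $P$ are easy: each of $P(\Gamma_j-\Gamma)P$, $P(\Gamma_j-\Gamma)Q$, and $Q(\Gamma_j-\Gamma)P$ has range inside an at most $m_T$-dimensional subspace, hence rank at most $m_T$; since the trace norm of a rank-$r$ operator is at most $r$ times its operator norm, and each block has operator norm at most $\|\Gamma_j-\Gamma\|_\infty$, these three terms contribute at most $3m_T\|\Gamma_j-\Gamma\|_\infty$, which is the $m_T\|\Gamma_j-\Gamma\|_\infty$ term in \eqref{eqlem01} up to a constant absorbed by $O_p$.

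The crux is the tail block $Q(\Gamma_j-\Gamma)Q$, which I would bound by $\|Q\Gamma_j Q\|_{\SC}+\|Q\Gamma Q\|_{\SC}$. The key device is the pinching inequality: with $U=P-Q$, a self-adjoint unitary, one has $P\Gamma_j P+Q\Gamma_j Q=\tfrac12(\Gamma_j+U\Gamma_j U)$, so unitary invariance of $\|\cdot\|_{\SC}$ gives $\|P\Gamma_j P\|_{\SC}+\|Q\Gamma_j Q\|_{\SC}=\|P\Gamma_j P+Q\Gamma_j Q\|_{\SC}\le\|\Gamma_j\|_{\SC}$ (the equality because the two summands act on orthogonal subspaces). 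Thus $\|Q\Gamma_j Q\|_{\SC}\le\|\Gamma_j\|_{\SC}-\|P\Gamma_j P\|_{\SC}$, and the reverse triangle inequality together with the rank bound gives $\|P\Gamma_j P\|_{\SC}\ge\|P\Gamma P\|_{\SC}-m_T\|\Gamma_j-\Gamma\|_\infty$. Since $\Gamma$ is nonnegative in every application (it is a covariance-type operator), $P\Gamma P$ and $Q\Gamma Q$ are nonnegative, so their trace norms equal their traces; evaluating the traces in the basis $\{w_i\}$ yields $\|P\Gamma P\|_{\SC}=\|\Gamma\|_{\SC}-\sum_{i>m_T}\langle\Gamma w_i,w_i\rangle$ and $\|Q\Gamma Q\|_{\SC}=\sum_{i>m_T}\langle\Gamma w_i,w_i\rangle$. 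Substituting produces $\|Q(\Gamma_j-\Gamma)Q\|_{\SC}\le(\|\Gamma_j\|_{\SC}-\|\Gamma\|_{\SC})+2\sum_{i>m_T}\langle\Gamma w_i,w_i\rangle+m_T\|\Gamma_j-\Gamma\|_\infty$, and combining with the finite-rank blocks gives \eqref{eqlem01}. For the concluding convergence statement, the tail $\sum_{i>m_T}\langle\Gamma w_i,w_i\rangle$ is the tail of the convergent series $\tr\Gamma=\|\Gamma\|_{\SC}<\infty$ and so tends to $0$ as $m_T\to\infty$, while the other two summands vanish in probability by hypothesis, giving $\|\Gamma_j-\Gamma\|_{\SC}\to_p0$.

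I expect the tail block to be the only real obstacle: we cannot estimate $\|Q\Gamma_j Q\|_{\SC}$ on its own, since $\Gamma_j$ is uncontrolled off the leading subspace, so the whole point is to trap it between $\|\Gamma_j\|_{\SC}$ and $\|P\Gamma_j P\|_{\SC}$ via pinching, thereby converting an intractable trace-norm tail into the gap $\|\Gamma_j\|_{\SC}-\|\Gamma\|_{\SC}$. Two minor points need care: the pinched operator must be recognized as block-diagonal so that its trace norm splits additively; and $\|\Gamma_j\|_{\SC}-\|\Gamma\|_{\SC}$ may have either sign, so the corresponding $O_p$ term is understood as $O_p(|\,\|\Gamma_j\|_{\SC}-\|\Gamma\|_{\SC}\,|)$, which affects neither the bound nor the convergence conclusion.
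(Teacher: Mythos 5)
Your proof is correct in substance and takes a genuinely different route from the paper's. The paper does not actually prove the bound \eqref{eqlem01} itself: it cites Lemma 2 and Theorem 2 of \cite{kubrusly1985convergence} for that display, and only supplies the final step that the tail $\sum_{j>m_T}\langle\Gamma w_j,w_j\rangle$ vanishes as $m_T\to\infty$ because $\Gamma$ is trace class. You instead give a self-contained argument: the four-block decomposition relative to $P=\sum_{i\le m_T}w_i\otimes w_i$, the rank bound (trace norm at most rank times operator norm) for the three blocks touching $P$, and the pinching inequality $\|P\Gamma_jP\|_{\SC}+\|Q\Gamma_jQ\|_{\SC}\le\|\Gamma_j\|_{\SC}$ to trap the otherwise uncontrollable tail block by the trace-norm gap. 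The individual steps check out (the identity $P\Gamma_jP+Q\Gamma_jQ=\tfrac12(\Gamma_j+U\Gamma_jU)$, the additivity of $\|\cdot\|_{\SC}$ over the two orthogonal blocks, the reverse-triangle step), and your final inequality is in fact a deterministic bound with explicit constants, which is stronger than the $O_p$ form of \eqref{eqlem01}; the ``Moreover'' conclusion then follows exactly as you say. What the paper's citation buys is brevity and whatever generality Kubrusly's result carries; what your route buys is a transparent proof of the key inequality that the paper leaves entirely to the literature.

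One caveat deserves to be made explicit. The lemma assumes only that $\Gamma$ is self-adjoint, whereas your identities $\|P\Gamma P\|_{\SC}=\|\Gamma\|_{\SC}-\sum_{i>m_T}\langle\Gamma w_i,w_i\rangle$ and $\|Q\Gamma Q\|_{\SC}=\sum_{i>m_T}\langle\Gamma w_i,w_i\rangle$ require $\Gamma\ge 0$: for a genuinely signed self-adjoint $\Gamma$, $\|Q\Gamma Q\|_{\SC}$ can strictly exceed $\tr(Q\Gamma Q)$, and the unsigned tail in \eqref{eqlem01} can even be negative. You flag this restriction, and it is harmless here: every application in the paper takes $\Gamma$ to be $C_{YY}$, $C_{XX,k_T}$, or $\Upsilon\Upsilon^\ast$, all nonnegative, and the paper's own concluding step identifies $\sum_{j\ge 1}\langle\Gamma w_j,w_j\rangle$ with $\|\Gamma\|_{\SC}$, which equally presumes nonnegativity. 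So the discrepancy is better viewed as looseness in the lemma's wording than as a gap in your argument. If the literal self-adjoint statement were wanted, your scheme still works after splitting $\Gamma=\Gamma^+-\Gamma^-$, with the tail term replaced by diagonal sums of $\Gamma^++\Gamma^-$ (and cross terms bounded via Cauchy--Schwarz); these still vanish as $m_T\to\infty$, so the convergence conclusion is unaffected.
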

					\begin{proof}
						Equation \eqref{eqlem01} directly follows from Lemma 2 (and also Theorem 2) of  \cite{kubrusly1985convergence}. Moreover, if $\|\Gamma_j\|_{\SC} - \|\Gamma\|_{\SC}\to_p 0$ and $m_T\|\Gamma_j-\Gamma\|_{\infty} \to_p 0$ as $m_T\to \infty$ and $T\to \infty$, we find that $\|\Gamma_j-\Gamma\|_{\SC} = O_p(\sum_{j=m_T+1}^\infty \langle \Gamma w_j, w_j \rangle)$. Since $\Gamma$ is a self-adjoint Schatten 1-class operator, we have  $\sum_{j=1}^\infty \langle \Gamma w_j, w_j \rangle \to_p \|\Gamma\|_{\SC} < \infty$, from which we conclude that  $\sum_{j=m_T+1}^\infty \langle \Gamma w_j, w_j \rangle \to_p 0$ as $m_T \to \infty$.
					\end{proof}
					\subsection{Proofs of the theoretical results}
					\begin{proof}[Proof of Proposition \ref{prop1}]
						From Theorem 1 of \cite{baker1973joint}, we find that $C_{XY}$ allows the following representation: for a unique bounded linear operator $R_{XY}$ satisfying $\|R_{XY}\|_{\infty} \leq 1$,
						\begin{align*} 
							C_{XY}  = {C}_{YY}^{1/2}R_{XY} {C}_{XX}^{1/2}.
						\end{align*}
						Let $C_{\min}=C_{YY}-C_{YY}^{1/2}R_{XY} R_{XY}^\ast {C}_{YY}^{1/2}$, which is clearly self-adjoint. Moreover, $C_{\min}$ is a nonnegative Schatten 1-class operator. To see this, note that for any $w \in \mathcal H$,  
						\begin{align} 
							\langle C_{\min}w,w \rangle =\langle (C_{YY}-C_{YY}^{1/2}R_{XY} R_{XY}^\ast {C}_{YY}^{1/2})w,w \rangle = \|C_{YY}^{1/2}w\|^2- \|R_{XY}^\ast {C}_{YY}^{1/2}w\|^2 \geq 0 \label{eqpf001aaa}
						\end{align}
						(i.e., $C_{\min}$ is nonnegative), which is because $\|R_{XY}^\ast {C}_{YY}^{1/2}w\|^2 \leq \|R_{XY}^\ast\|_{\infty}^2 \|{C}_{YY}^{1/2}w\|^2 \leq  \|{C}_{YY}^{1/2}w\|^2$. Moreover, from \eqref{eqpf001aaa} and Lemma \ref{lema1}\eqref{lema1b}, we know that, for any orthonormal basis $\{w_j\}_{j\geq 1}$, $\|C_{\min}\|_{\SC} = \sum_{j=1}^\infty (\|C_{YY}^{1/2}w_j\|^2- \|R_{XY}^\ast {C}_{YY}^{1/2}w_j\|^2) < \infty$. 
						
						We then find the following holds for any $B\in \mathcal L_{\infty}$ and $w\in \mathcal H$:
						\begin{align}
							&	\langle(C_{YY} - C_{XY}B^\ast - B C_{XY}^\ast   +  BC_{XX}B^\ast) w,w\rangle \notag \\
							& = \|C_{YY}^{1/2}w\|^2 +\|C_{XX}^{1/2}B^\ast w\|^2 - 2 \langle  C_{XX}^{1/2}B^\ast w,R_{XY}^\ast C_{YY}^{1/2}w \rangle \notag  \\&=   \| C_{XX}^{1/2}B^\ast w- R_{XY}^\ast C_{YY}^{1/2}w\|^2 + \|C_{YY}^{1/2}w\|^2- \|R_{XY}^\ast C_{YY}^{1/2}w\|^2 \notag \\ &=   \| C_{XX}^{1/2}B^\ast w- R_{XY}^\ast C_{YY}^{1/2}w\|^2 + \langle C_{\min} w,w\rangle. \label{eqpf001}
						\end{align}
						We know from \eqref{eqpf001} that 
						\begin{align}
							\mathbb{E}\|Y_t-BX_t\|^2 &= 	\sum_{j=1}^\infty \langle(C_{YY} - C_{XY}B^\ast - B C_{XY}^\ast   +  BC_{XX}B^\ast) w_j,w_j\rangle\notag  \\
							&= \sum_{j=1}^\infty \| C_{XX}^{1/2}B^\ast w_j- R_{XY}^\ast C_{YY}^{1/2}w_j\|^2 + \|C_{\min}\|_{\SC}, 
							\label{eqpf001a}
						\end{align}
						where $\{w_j\}_{j\geq 1}$ is any orthonormal basis of $\mathcal H$.  
						Note that $C_{XX}^{1/2}B^\ast- R_{XY}^\ast C_{YY}^{1/2}$ is a Schatten 2-class operator (Lemma \ref{lema1}\eqref{lema1b}) and thus we find that $\sum_{j=1}^\infty \| C_{XX}^{1/2}B^\ast w_j- R_{XY}^\ast C_{YY}^{1/2}w_j\|^2 = \|C_{XX}^{1/2}B^\ast-R_{XY}^\ast C_{YY}^{1/2}\|_{\SCC}^2$. From this result combined with \eqref{eqpf001a}, the desired result immediately follows. 	
					\end{proof}   
					\begin{proof}[Proofs of Theorem \ref{thmblp} and Corollary \ref{cor1}]
						To accommodate more general cases, which are considered in Corollary \ref{cor1}, we hereafter assume that $\tau_T^{-1} = O_p(T^{\gamma_1})$ and $\upsilon_T^{-1} = O_p(T^{\gamma_1})$  for some $\gamma_1 \in (0,\beta)$, and $\|\widehat{C}_{XX}-C_{XX}\|_{\infty}$, $\|\widehat{C}_{YY}-C_{YY}\|_{\infty}$ and $\|\widehat{C}_{XY}-C_{XY}\|_{\infty}$ are all $O_p(T^{\beta})$ for some $\beta \in (0,1/2]$. Our proof of Theorem \ref{thmblp} corresponds to the particular case with $\beta=1/2$. 
						
						Note that $\sup_{j \geq 1}|\hat{\lambda}_j - \lambda_j| \leq \|\widehat{C}_{XX}-C_{XX}\|_{\infty} = O_p(T^{-\beta})$ (Lemma 4.2 of \citealp{Bosq2000}). Under Assumption \ref{assumpelbow}, we have $\hat{\lambda}_{k_T} - \hat{\lambda}_{k_T+1} \geq \tau_T$ and thus 
						\begin{equation}
							{T}^{\beta}(\hat{\lambda}_{k_T} - \hat{\lambda}_{k_T+1})  =  {T}^{\beta}(\hat{\lambda}_{k_T} - \lambda_{k_T} + \lambda_{k_T+1} - \hat{\lambda}_{k_T+1}) + {T}^{\beta}(\lambda_{k_T}-\lambda_{k_T+1}) \geq  T^{\beta}\tau_T. \label{pfeq001}
						\end{equation}
						If $\lambda_{k_T}= \lambda_{k_T+1}$, \eqref{pfeq001} reduces to ${T}^{\beta}(\hat{\lambda}_{k_T} - \hat{\lambda}_{k_T+1}) \geq T^{\beta}\tau_T$. Moreover, since ${T}^{\beta}(\hat{\lambda}_{k_T} - \hat{\lambda}_{k_T+1})=O_p(1)$ and $T^{\beta}\tau_T\to_p \infty$, $\mathbb{P}\{\hat{\lambda}_{k_T} - \hat{\lambda}_{k_T+1} \geq \tau_T | \lambda_{k_T}= \lambda_{k_T+1}\} \to 0$ as $T \to \infty$. Using the Bayes' rule and the facts that $\mathbb{P}\{\hat{\lambda}_{k_T} - \hat{\lambda}_{k_T+1} \geq \tau_T\} = 1$ and $\mathbb{P}\{\lambda_{k_T}= \lambda_{k_T+1}\}=\mathbb{P}\{\lambda_{k_T}= \lambda_{k_T+1}|\hat{\lambda}_{k_T} - \hat{\lambda}_{k_T+1} \geq \tau_T\}$ by Assumption \ref{assumpelbow}, we find that $\mathbb{P}\{ \lambda_{k_T}= \lambda_{k_T+1}\} \to 0$. 
						To establish the desired consistency, we thus may subsequently assume that \(\lambda_{k_T} > \lambda_{k_T+1}\). 
						
						Let $C_{\min} = C_{YY}- C_{YY}^{1/2}R_{XY}R_{XY}^\ast C_{YY}^{1/2}$. Since there is $A \in \mathcal L_{\infty}$ such that $C_{XY}=AC_{XX}$,  $C_{\min} = C_{YY}- AC_{XX}A^\ast$ (Corollary \ref{cor0}). From the triangular inequality applied to the $\SC$-norm, we find that
						\begin{align}
							\|\Sigma(\widehat{A})-C_{\min}\|_{\SC}  \notag 
							&\leq  {\|\widehat{C}_{YY}-C_{YY} \|_{\SC}} + \|\widehat{A}\widehat{C}_{XX,k_T}\widehat{A}^\ast  -A{C}_{XX,k_T} A^\ast\|_{\SC} \\ & \quad + \|{A}{C}_{XX,k_T}{A}^\ast  -A{C}_{XX} A^\ast\|_{\SC}.  \label{eqadd01}
						\end{align}
						It suffices to show that each summand in the RHS of \eqref{eqadd01} is \(o_p(1)\).
						
						We will first consider the first term in the RHS of \eqref{eqadd01}. Let $m_T$ be any divergent sequence (depending on $T$) but satisfy $T^{-\beta}m_T \to 0$. Note that 
						$\|\widehat{C}_{YY}\|_{\SC}-\|{C}_{YY}\|_{\SC} =  \sum_{j=1}^{{m}_T} (\hat{\mu}_j-{\mu}_j)  +  \sum_{j={m_T}+1}^{\infty} (\hat{\mu}_j-{\mu}_j) \leq m_T  \|\widehat{C}_{YY}-{C}_{YY}\|_{\infty} +  \sum_{j=m_T+1}^{\infty} (\hat{\mu}_j-{\mu}_j)$. For every $\delta > 0$, let $E_{\delta}=\{|\sum_{j=m_T+1}^{\infty} (\hat{\mu}_j-{\mu}_j)|  > \frac{\delta}{2}\}$ and $F_{\delta}=\{ m_T  \|\widehat{C}_{YY}-{C}_{YY}\|_{\infty} > \frac{\delta}{2}\}$. 
						Since $ \mathbb{P}\{ m_T  \|\widehat{C}_{YY}-{C}_{YY}\|_{\infty} +  \sum_{j=m_T+1}^{\infty} (\hat{\mu}_j-{\mu}_j)  > \delta \}\leq \mathbb{P}\{ E_{\delta}\}  + \mathbb{P}\{ F_{\delta}\}$, we find that 
						\begin{equation*}
							\mathbb{P}\{ \|\widehat{C}_{YY}\|_{\SC}-\|{C}_{YY}\|_{\SC} > \delta\}\leq
							\mathbb{P}\{ E_{\delta}\}  + \mathbb{P}\{ F_{\delta}\}.  
						\end{equation*}
						Note that $\widehat{C}_{YY}$ and $C_{YY}$ are Schatten 1-class operators (almost surely) and also $m_T$ increases without bound. Moreover, $m_T\|\widehat{C}_{YY}-C_{YY}\|_{\infty} = O_p(m_T T^{-\beta}) = o_p(1)$ under our assumptions. 
						These results imply that $\mathbb{P}\{E_{\delta}\} \to 0$ and  $\mathbb{P}\{F_{\delta}\} \to 0$ as $T \to \infty$, and thus we conclude that $\|\widehat{C}_{YY}\|_{\SC}-\|{C}_{YY}\|_{\SC} \to_p 0$. Combining this result with Lemma \ref{lema2} and the fact that $m_T\|\widehat{C}_{YY}-C_{YY}\|_{\infty} = o_p(1)$, we find that $\|\widehat{C}_{YY}-{C}_{YY}\|_{\SC} \to_p 0$ as desired. 
						
						We next consider the third term in the RHS of \eqref{eqadd01}. 
						We know from Lemma \ref{lema1} that \(\|{A}{C}_{XX,k_T}{A}^\ast  -A{C}_{XX} A^\ast\|_{\SC} \leq  \|A\|^2_{\infty} \|{C}_{XX,k_T} -{C}_{XX} \|_{\SC}\). Since  $C_{XX}$ is a Schatten 1-class operator and $k_T$ grows without bound,   $\|C_{XX,k_T} - C_{XX}\|_{\SC} = \sum_{j=k_T+1}^\infty \lambda_j \to_p 0$ and thus $ \|{A}{C}_{XX,k_T}{A}^\ast  -A{C}_{XX} A^\ast\|_{\SC} =o_p(1)$ as desired.

						We lastly  consider the second term in the RHS of \eqref{eqadd01}. Let $\varepsilon_t = Y_t - AX_t$.  Since $\widehat{C}_{XY} = A \widehat{C}_{XX} + \widehat{C}_{\varepsilon X}$ and $\widehat{A}\widehat{C}_{XX,k_T}\widehat{A}^\ast = 	\widehat{C}_{XY}\widehat{C}_{XX,k_T}^{-1}\widehat{C}_{XY}^\ast$,  we have 
						\begin{align*}
							\widehat{A}\widehat{C}_{XX,k_T}\widehat{A}^\ast  &= (A \widehat{C}_{XX} + \widehat{C}_{\varepsilon X}) \widehat{C}_{XX,k_T}^{-1} (A \widehat{C}_{XX} + \widehat{C}_{\varepsilon X})^\ast \notag \\ &= A \widehat{C}_{XX,k_T} A^\ast + A  \widehat{\Pi}_{X,k_T} \widehat{C}_{\varepsilon X}^\ast + \widehat{C}_{\varepsilon X} \widehat{\Pi}_{X,k_T} A^\ast + \widehat{C}_{\varepsilon X}\widehat{C}_{XX,k_T}^{-1} \widehat{C}_{\varepsilon X}^\ast,    
						\end{align*}
						where $\widehat{\Pi}_{X,k_T}  = \sum_{j=1}^{k_T} \hat{v}_j \otimes \hat{v}_j$. 
						Therefore, we have  
						\begin{align} 
							&\|\widehat{A}\widehat{C}_{XX,k_T}\widehat{A}^\ast  -A{C}_{XX,k_T} A^\ast\|_{\SC} \notag \\ &\leq \|{A}\widehat{C}_{XX,k_T}{A}^\ast  -A{C}_{XX,k_T} A^\ast\|_{\SC}  + \|A  \widehat{\Pi}_{X,k_T} \widehat{C}_{\varepsilon X}^\ast + \widehat{C}_{\varepsilon X} \widehat{\Pi}_{X,k_T} A^\ast + \widehat{C}_{\varepsilon X}\widehat{C}_{XX,k_T}^{-1} \widehat{C}_{\varepsilon X}^\ast\|_{\SC}. \label{eqadd2}\end{align}
						It will be proved later that the first term in the RHS of \eqref{eqadd2} is $o_p(1)$, i.e., 
						\begin{equation}
							\|{A}\widehat{C}_{XX,k_T}{A}^\ast  -A{C}_{XX,k_T} A^\ast\|_{\SC} = o_p(1). \label{eqadd2a}
						\end{equation} 
						We deduce from Lemma \ref{lema1}\eqref{lema1a} that the second term in the RHS of \eqref{eqadd2}, below denoted simply as $\mathcal {D}$, satisfies
						\begin{align} \label{eqadd3}
							\mathcal {D}\leq O(1)\| \widehat{\Pi}_{k_T}\widehat{C}_{\varepsilon X}^\ast \|_{\SC}  +  \|\widehat{C}_{\varepsilon X}\widehat{C}_{XX,k_T}^{-1} \widehat{C}_{\varepsilon X}^\ast\|^2_{\SC}.
						\end{align} 
						Since $\widehat{C}_{X\varepsilon} = T^{-1}\sum_{t=1}^T X_t \otimes Y_t - T^{-1} \sum_{t=1}^T X_t \otimes AX_t = C_{YX} - AC_{XX}  + O_p(T^{-\beta}) = O_p(T^{-\beta})$, we find that 
						\begin{align} \label{eqadd4}
							\max\left\{\|\widehat{\Pi}_{k_T}\widehat{C}_{\varepsilon X}^\ast\|_{\SC},  	\|\widehat{C}_{\varepsilon X}\widehat{\Pi}_{k_T}\|_{\SC}\right\} &\leq  \|\widehat{C}_{\varepsilon X}\|_{\infty} \|\widehat{\Pi}_{k_T}\|_{\SC} =  O_p(T^{-\beta}) k_{T} = o_p(1)  
						\end{align}
						and also
						\begin{align}
							\| \widehat{C}_{\varepsilon X}\widehat{C}_{XX,k_T}^{-1} \widehat{C}_{\varepsilon X}^\ast\|_{\SC}  &= \sum_{\ell=1}^\infty  \sum_{j=1}^{k_T} \hat{\lambda}_{j}^{-1} \langle \widehat{C}_{\varepsilon X} \hat{v}_j, \hat{v}_\ell \rangle^2 \leq \sum_{j=1}^{k_T} \hat{\lambda}_{j}^{-1} \| \widehat{C}_{\varepsilon X} \hat{v}_j\|^2  \leq \| \widehat{C}_{\varepsilon X}\|_{\infty}^2 \sum_{j=1}^{k_T} \hat{\lambda}_{j}^{-1} = o_p(1),\label{eqadd5}
						\end{align}
						where the last equality follows from the fact that $\| \widehat{C}_{\varepsilon X}\|_{\infty}^2 = O_p(T^{-2\beta})$ and $\sum_{j=1}^{k_T} \hat{\lambda}_j^{-1} \leq \tau_T^{-1} k_T = o(T^{2\beta})$ hold under Assumptions \ref{assumpelbow} and \ref{assum1}. As shown by \eqref{eqadd3}-\eqref{eqadd5},  the second term in the RHS of \eqref{eqadd2} is $o_p(1)$. Combining this result with  \eqref{eqadd2a}, we find that   $\|\widehat{A}\widehat{C}_{XX,k_T}\widehat{A}^\ast  -A{C}_{XX,k_T} A^\ast\|_{\SC} = o_p(1)$ as desired.
						
						It remains to verify \eqref{eqadd2a} to complete the proof. 
						From Lemma \ref{lema1}\eqref{lema1a}, we know that  \begin{equation}\label{eqpfimpadd}
							\|A \widehat{C}_{XX,k_T} A^\ast - A {C}_{XX,k_T} A^\ast \|_{\SC} \leq \|A\|_{\infty}^2 \|\widehat{C}_{XX,k_T}-{C}_{XX,k_T}\|_{\SC},
						\end{equation} 
						and thus it suffices to show that the RHS of \eqref{eqpfimpadd} is $o_p(1)$.  To this end, we first note that
						\begin{align}
							\|\widehat{C}_{XX,k_T}\|_{\SC}-\|{C}_{XX,k_T}\|_{\SC} =  \sum_{j=1}^{k_T} (\hat{\lambda}_j-{\lambda}_j)
							\leq k_T \|\widehat{C}_{XX}-{C}_{XX}\|_{\infty} \to_p 0. \label{eqpfimp}
						\end{align}
						We then obtain an upper bound of $\|\widehat{C}_{XX,k_T} - {C}_{XX,k_T}\|_{\infty}$ as follows:
						\begin{align}
							\|\widehat{C}_{XX,k_T} - {C}_{XX,k_T}\|_{\infty} &\leq \left\|\widehat{\Lambda}_{k_T} + \sum_{j=1}^{k_T} (\hat{\lambda}_j-\lambda_j) {f}_j \otimes {f}_j  \right\|_{\infty} \leq \left\|\widehat{\Lambda}_{k_T}\right\|_{\infty}  +  O_p(T^{-\beta}), \label{eqpfimp00}
						\end{align}
						where $\widehat{\Lambda}_{k_T} = \sum_{j=1}^{k_T} \hat{\lambda}_j \hat{f}_j \otimes \hat{f}_j - \sum_{j=1}^{k_T} \hat{\lambda}_j {f}_j \otimes {f}_j$.
						From similar algebra used in the proof of Lemma 3.1 of \cite{REIMHERR201562} and the fact that $\|\cdot\|_{\infty}\leq \|\cdot\|_{\SCC}$, we find that 
						\begin{equation} \label{eqpfimp01}
							\left\|\widehat{\Lambda}_{k_T}\right\|_{\infty}^2 \leq  \sum_{j=1}^{k_T} \hat{\lambda}_j^2 \sum_{\ell=k_T+1}^\infty  \langle \hat{f}_j, f_{\ell} \rangle^2 + \sum_{\ell=1}^{k_T} \hat{\lambda}_\ell^2 \sum_{j=k_T+1}^\infty  \langle \hat{f}_j, f_{\ell} \rangle^2 + \sum_{j=1}^{k_T} \sum_{\ell=1}^{k_T} (\hat{\lambda}_j - \hat{\lambda}_\ell)^2 \langle \hat{f}_j, f_{\ell} \rangle^2.
						\end{equation}
						Observe that, for every $\ell=1,\ldots,k_T$, 
						\begin{align} 
							(\hat{\lambda}_{\ell} - \hat{\lambda}_{\ell+1})^2 \sum_{j=k_T+1}^\infty   \langle \hat{f}_j, f_{\ell} \rangle^2&\leq \sum_{j=k_T+1}^\infty  (\hat{\lambda}_{\ell} - \hat{\lambda}_{j})^2  \langle \hat{f}_j, f_{\ell} \rangle^2 
							\notag \\ &= \sum_{j=k_T+1}^\infty  (\langle \hat{\lambda}_{\ell}\hat{f}_j,  f_{\ell} \rangle - \langle \widehat{C}_{XX} \hat{f}_j,  f_{\ell} \rangle)^2\notag\\
							&=\sum_{j=k_T+1}^\infty  (\langle \hat{f}_j,  (\hat{\lambda}_{\ell}-\lambda_{\ell})f_{\ell} \rangle + \langle \hat{f}_j,  (C_{XX} -  \widehat{C}_{XX})f_{\ell} \rangle)^2  \notag \\ 
							&\leq  \|(\hat{\lambda}_{\ell}-\lambda_{\ell})f_{\ell} + ({C}_{XX}-\widehat{C}_{XX})f_{\ell}\|^2. \label{eqpfimp01add} 
						\end{align}
						Since $\sup_{\ell\geq 1}|\hat{\lambda}_{\ell}-\lambda_{\ell}| \leq\|\widehat{C}_{XX}-C_{XX}\|_{\infty}=O_p(T^{-\beta})$, we find that the RHS of \eqref{eqpfimp01add} is $O_p(T^{-2\beta})$. Using the fact that  $\tau_T^{-1} \geq (\hat{\lambda}_\ell- \hat{\lambda}_{\ell+1})^{-1}$ for all $\ell=1,\ldots,k_T$, the following is deduced: for some $\delta > 0$, 
						\begin{equation}
							\sum_{\ell=1}^{k_T} {\hat{\lambda}_\ell^2} \sum_{j=k_T+1}^\infty  \langle \hat{f}_j, f_{\ell} \rangle^2 = O_p(T^{-2\beta}) \sum_{\ell=1}^{k_T} \frac{\hat{\lambda}_\ell^2 }{(\hat{\lambda}_{\ell}-\hat{\lambda}_{\ell+1})^2}  \leq O_p(\tau_T^{-2}T^{-2\beta})  \sum_{\ell=1}^{k_T} \hat{\lambda}_\ell^2  = O_p(T^{-\delta}), \label{eqpfimp02} 
						\end{equation}
						where the last equality is deduced from the facts that $\sum_{\ell=1}^{k_T} \hat{\lambda}_\ell^2 = O_p(1)$ and $\tau_T^{-1} = O_p(T^{\gamma_1})$ for some $\gamma_1 \in (0,\beta)$ under the employed conditions. From nearly identical arguments, we also find that 
						\begin{equation}
							\sum_{j=1}^{k_T} \hat{\lambda}_j^2 \sum_{\ell=k_T+1}^\infty  \langle \hat{f}_j, f_{\ell} \rangle^2 = O_p(T^{-\delta}). \label{eqpfimp03}
						\end{equation} Moreover, from similar algebra used in \eqref{eqpfimp01add} we find that 
						\begin{align}
							\sum_{j=1}^{k_T} \sum_{\ell=1}^{k_T} (\hat{\lambda}_j - \hat{\lambda}_\ell)^2 \langle \hat{f}_j, f_{\ell} \rangle^2 
							&=\sum_{j=1}^{k_T} \sum_{\ell=1}^{k_T} (\langle \hat{f}_j, \widehat{C}_{XX} f_{\ell} \rangle-\langle  \hat{f}_j, (\hat{\lambda}_\ell - \lambda_{\ell})f_{\ell} \rangle + \langle  \hat{f}_j, C_{XX}f_{\ell} \rangle)^2  
							\notag\\&\leq  \sum_{j=1}^{k_T}  \|(\hat{\lambda}_{\ell}-\lambda_{\ell})f_{\ell} + (\widehat{C}_{XX}-C_{XX})f_{\ell}\|^2
							= O_p(k_T T^{-2\beta}).  \label{eqpfimp04}
						\end{align}
						From \eqref{eqpfimp00}-\eqref{eqpfimp04}, we know that there is a divergent sequence $m_T$ such that $m_T \|\widehat{C}_{XX,k_T} - {C}_{XX,k_T}\|_{\infty} \to_p 0$. Combining this result with \eqref{eqpfimp} and Lemma \ref{lema2}, we find that the RHS of \eqref{eqpfimpadd} is $o_p(1)$ (and thus \eqref{eqadd2a} holds).
					\end{proof}		
						
						\begin{proof}[Proof of Corollary \ref{cor0}]
							(a) directly follows from the facts that (i) $\|(B C_{XX}^{1/2} - C_{YY}^{1/2} R_{XY})w_j\|^2 = 0$ for all $j \geq 1$ if and only if $ \| BC_{XX}^{1/2} -  C_{YY}^{1/2}R_{XY}\|_{\SCC}^2=0$ and (ii) the Hahn-Banach extension theorem (see e.g., Theorem 1.9.1 of \citealp{megginson1998}). This result  in turn implies (b) due to the fact that  \(C_{XY}  = {C}_{YY}^{1/2}R_{XY} {C}_{XX}^{1/2}\) as observed in Proposition \ref{prop1}. 
						\end{proof}
						
						\begin{proof}[Proof of Corollary \ref{cor2}]
							Note that \eqref{eqadd01} holds when $\widehat{A}$ is replaced by $\widetilde{A}$ , and $\|\widehat{C}_{YY}-C_{YY}\|_{\SC} = o_p(1)$  and $\|{A}{C}_{XX,k_T}{A}^\ast  -A{C}_{XX} A^\ast\|_{\SC} = o_p(1)$ can be shown as in our proof of Theorem \ref{thmblp}. 	
							We thus have \begin{equation*}
								\|\Sigma(\widetilde{A})-C_{\min}\|_{\SC} \leq  o_p(1) + \|\widetilde{A}\widehat{C}_{XX,k_T}\widetilde{A}^\ast  -A{C}_{XX,k_T} A^\ast\|_{\SC},  
							\end{equation*}
							and hence it suffices to show that $\|\widetilde{A}\widehat{C}_{XX,k_T}\widetilde{A}^\ast  -A{C}_{XX,k_T} A^\ast\|_{\SC} = o_p(1)$. Note that 
							\begin{equation*}
								\widetilde{A}\widehat{C}_{XX,k_T}\widetilde{A}^\ast = \widehat{\Pi}_{Y,\ell_T}(A \widehat{C}_{XX,k_T} A^\ast  + A  \widehat{\Pi}_{X,k_T} \widehat{C}_{\varepsilon X}^\ast  +  \widehat{C}_{\varepsilon X} A^\ast + \widehat{C}_{\varepsilon X}\widehat{C}_{XX,k_T}^{-1} \widehat{C}_{\varepsilon X}^\ast)\widehat{\Pi}_{Y,\ell_T}.    
							\end{equation*}
							Using similar arguments used in our proof of Theorem \ref{thmblp}, Lemma \ref{lema1} and the facts that $\|\widehat{\Pi}_{Y,\ell_T}\|_{\infty} \leq 1$ for any arbitrary $\ell_T\geq 1$ and $\|{A}{C}_{XX,k_T}{A}^\ast  -A{C}_{XX} A^\ast\|_{\SC} = o_p(1)$, we find the following: 
							(a) $\|\widehat{\Pi}_{Y,\ell_T}({A}\widehat{C}_{XX,k_T}{A}^\ast  -A{C}_{XX,k_T} A^\ast)\widehat{\Pi}_{Y,\ell_T}\|_{\SC} = o_p(1)$, 	(b) $\|\widehat{\Pi}_{Y,\ell_T}({A}{C}_{XX,k_T}{A}^\ast  -A{C}_{XX} A^\ast)\widehat{\Pi}_{Y,\ell_T}\|_{\SC} = o_p(1)$, and (c) $\|\widehat{\Pi}_{Y,\ell_T} (A  \widehat{\Pi}_{X,k_T} \widehat{C}_{\varepsilon X}^\ast  +  \widehat{C}_{\varepsilon X} A^\ast + \widehat{C}_{\varepsilon X}\widehat{C}_{XX,k_T}^{-1} \widehat{C}_{\varepsilon X}^\ast)\widehat{\Pi}_{Y,\ell_T}\|_{\SC} = o_p(1)$. Therefore, 
							\begin{equation*}
								\|	\widetilde{A}\widehat{C}_{XX,k_T}\widetilde{A}^\ast - \widehat{\Pi}_{Y,\ell_T}A{C}_{XX} A^\ast\widehat{\Pi}_{Y,\ell_T}\|_{\SC} \to_p 0.
							\end{equation*}
							If  $\|\widehat{\Pi}_{Y,\ell_T}A{C}_{XX} A^\ast\widehat{\Pi}_{Y,\ell_T} -A{C}_{XX} A^\ast \|_{\SC} \to_p 0$,  the desired result is established.  Let $\Upsilon  = C_{YY}^{1/2} R_{XY}$ and $\widehat{\Upsilon} = \widehat{\Pi}_{Y,\ell_T}  C_{YY}^{1/2} R_{XY}$. We then have $AC_{XX}A^\ast = \Upsilon \Upsilon^{\ast}$ (see Proposition \ref{prop1}) and $\widehat{\Pi}_{Y,\ell_T}A{C}_{XX} A^\ast\widehat{\Pi}_{Y,\ell_T}=\widehat{\Upsilon}\widehat{\Upsilon}^\ast$. Observe that 
							\begin{align}
								\|\widehat{\Upsilon}\widehat{\Upsilon}^\ast - {\Upsilon}{\Upsilon}^\ast\|_{\infty} = \|\widehat{\Upsilon} (\widehat{\Upsilon}^\ast-\Upsilon^\ast) + (\widehat{\Upsilon}-\Upsilon)\Upsilon^{\ast}\|_{\infty} &\leq O_p(1) \|\widehat{\Upsilon}-\Upsilon\|_{\infty} \notag \\ &\leq O_p(1) \| \widehat{\Pi}_{Y,\ell_T}  C_{YY}^{1/2}- C_{YY}^{1/2}\|_{\infty}.\notag
							\end{align}
							This implies that $\|\widehat{\Upsilon}\widehat{\Upsilon}^\ast - {\Upsilon}{\Upsilon}^\ast\|_{\infty}\to_p 0$ if there exists a divergent sequence $m_T$ such that $m_T\| \widehat{\Pi}_{Y,\ell_T}  C_{YY}^{1/2}- C_{YY}^{1/2}\|_{\infty} \to_p 0$ and $m_T \leq \ell_T$ for large $T$. Moreover, we find the following from the fact that \(\Upsilon\Upsilon^\ast\) is nonnegative self-adjoint and \(\{\hat{w}_j\}_{j=1}^\infty\) is an orthonormal basis of \(\mathcal H\): 
							\begin{equation*}
								\|\Upsilon\Upsilon^\ast\|_{\SC} - \|\widehat{\Upsilon}\widehat{\Upsilon}\|_{\SC} = \sum_{j=\ell_T+1}^{\infty} \langle \Upsilon\Upsilon^\ast \hat{w}_j, \hat{w}_j \rangle \leq \sum_{j=m_T+1}^{\infty} \langle \Upsilon\Upsilon^\ast \hat{w}_j, \hat{w}_j \rangle,
							\end{equation*} 
							which is $o_p(1)$ since $\Upsilon\Upsilon^\ast = C_{YY}^{1/2} R_{XY}R_{XY}^\ast C_{YY}^{1/2}$ is a Schatten 1-class (Lemma \ref{lema1}\eqref{lema1a}). We thus deduce from Lemma \ref{lema2} that $\|\widehat{\Upsilon}\widehat{\Upsilon}^\ast - {\Upsilon}{\Upsilon}^\ast\|_{\SC} \to_p 0$ as desired. 
						\end{proof}

						\section{Additional simulation results for alternative estimators} \label{sec_addmonte}
						In this section, we experiment with an alternative estimator \(\widetilde{A}\) which is introduced in Section \ref{sec_reduction}. 
						We replicate the same simulation experiments conducted in Section \ref{secest2} using the alternative estimator \(\widetilde{A}\) introduced in Section \ref{sec_reduction}.  We construct \(\widehat{\Pi}_{Y,\ell_T}\) using the eigenvectors \(\{\hat{u}_j\}_{j\geq 1}\) of \(\widehat{C}_{YY}\) with \(\ell_T = \lfloor T^{1/2} \rfloor\). Overall, we found that the computed excess MSPEs tend to be similar to those obtained with \(\widehat{A}\), providing supporting evidence for our finding in Corollary \ref{cor2}. The simulation results are reported in Table \ref{tab2}.  We also experimented with \(\widehat{\Pi}_{Y,\ell_T}\) constructed from the eigenvectors \(\{\hat{v}_j\}_{j\geq 1}\) of \(\widehat{C}_{XX}\), and the results are reported in Table \ref{tab3}.  Even if the empirical MSPEs tend to be larger than in the previous case, we obtained overall similar simulation results.  
						

						\begin{table}[H]
							\small
							\caption{Excess MSPE of the alternative predictor, $\widehat{\Pi}_{Y,\ell_T} = \sum_{j=1}^{\ell_T} \hat{u}_j\otimes\hat{u}_j$}  
							\label{tab2}
							\renewcommand*{\arraystretch}{1} \vspace{-0.1em}
							\begin{minipage}{\linewidth}
								\centering
								\subcaptionbox{Case BB\vspace{-0.55em}}{%
									\vspace{0em} 
									\begin{tabular*}{1\linewidth}{@{\extracolsep{\fill}}c@{\hspace{1\tabcolsep}}| ccccc@{\hspace{1\tabcolsep}}|ccccc}
										\hline\hline
										& &\multicolumn{3}{c}{$\gamma= 0.475$}& & &\multicolumn{3}{c}{$\gamma= 0.45$}&\\ \hline 
										\hspace{-0.25cm}	$T$ & $50$ & $100$ & $200$ &{$400$} & {$800$} & $50$ & $100$ & $200$ &{$400$} & {$800$} \\
										\hline 
										\hspace{-0.1cm}	Model 1& 0.043& 0.035& 0.023& 0.017 &0.011& 0.071& 0.049& 0.029& 0.021& 0.013 \\
										\hspace{-0.1cm}	Model 2 & 0.045& 0.035& 0.023& 0.017& 0.012& 0.071& 0.051& 0.030 &0.021& 0.014 \\
										\hspace{-0.1cm}	Model 3 & 0.056 &0.045& 0.030& 0.022& 0.015& 0.091 &0.063 &0.038 &0.027& 0.017 \\
										\hspace{-0.1cm}	Model 4 &0.057 &0.046 &0.031 &0.023& 0.015& 0.092 &0.065& 0.039& 0.028& 0.018 \\
										\hline\hline
									\end{tabular*}
								}
							\end{minipage}\\[0.4em]
							
							\begin{minipage}{\linewidth}
								\centering
								\subcaptionbox{Case CBM\vspace{-0.55em}}{%
									\vspace{0em} 
									\begin{tabular*}{1\linewidth}{@{\extracolsep{\fill}}c@{\hspace{1\tabcolsep}}| ccccc@{\hspace{1\tabcolsep}}|ccccc}
										\hline\hline
										& &\multicolumn{3}{c}{$\gamma= 0.475$}& & &\multicolumn{3}{c}{$\gamma= 0.45$}&\\ \hline 
										\hspace{-0.1cm}	$T$ & $50$ & $100$ & $200$ &{$400$} & {$800$} & $50$ & $100$ & $200$ &{$400$} & {$800$} \\
										\hline 
										\hspace{-0.1cm}	Model 1& 0.043& 0.035& 0.023& 0.017& 0.011&  0.071& 0.050 &0.029& 0.021& 0.013 \\
										\hspace{-0.1cm}	Model 2 &0.044& 0.036& 0.023 &0.017 &0.012&  0.072 &0.051& 0.030 &0.021& 0.014 \\
										\hspace{-0.1cm}	Model 3 & 0.057 &0.046& 0.030& 0.022 &0.015 &0.093& 0.065& 0.039 &0.027& 0.018 \\
										\hspace{-0.1cm}	Model 4 &0.057& 0.047 &0.031& 0.023& 0.015& 0.094& 0.066& 0.039 &0.028& 0.018 \\
										\hline\hline
									\end{tabular*}
								}
							\end{minipage}\\[0.4em]
							
							\begin{minipage}{\linewidth}
								\centering
								\subcaptionbox{Case BM\vspace{-0.55em}}{%
									\vspace{0em} 
									\begin{tabular*}{1\linewidth}{@{\extracolsep{\fill}}c@{\hspace{1\tabcolsep}}| ccccc@{\hspace{1\tabcolsep}}|ccccc}
										\hline\hline
										& &\multicolumn{3}{c}{$\gamma= 0.475$}& & &\multicolumn{3}{c}{$\gamma= 0.45$}&\\ \hline 
										$T$ & $50$ & $100$ & $200$ &{$400$} & {$800$} & $50$ & $100$ & $200$ &{$400$} & {$800$} \\
										\hline 
										\hspace{-0.1cm}	Model 1&  0.012& 0.010& 0.006 &0.004& 0.004& 0.026& 0.017& 0.009& 0.006& 0.004 \\
										\hspace{-0.1cm}	Model 2&0.012& 0.010& 0.006& 0.004 &0.004 & 0.027 &0.018& 0.009& 0.006& 0.004 \\
										\hspace{-0.1cm}	Model 3 & 0.018 &0.014 &0.009& 0.006& 0.005 &0.037 &0.024 &0.013 &0.008& 0.006 \\
										\hspace{-0.1cm}	Model 4 &0.018& 0.015& 0.009& 0.006& 0.005 & 0.038& 0.025 &0.013 &0.008& 0.006 \\
										\hline\hline
									\end{tabular*}
								}
							\end{minipage}
							
							\vspace{0.4em}
							\begin{minipage}{1\linewidth}
								{\footnotesize Notes: The excess MSPEs are calculated as in Table \ref{tab1}.} 
							\end{minipage}
						\end{table}

						\begin{table}[H]
							\small
							\caption{Excess MSPE of the alternative predictor, $\widehat{\Pi}_{Y,\ell_T} = \sum_{j=1}^{\ell_T} \hat{v}_j\otimes\hat{v}_j$}  
							\label{tab3}
							\renewcommand*{\arraystretch}{1} \vspace{-0.1em}
							\begin{minipage}{\linewidth}
								\centering
								\subcaptionbox{Case BB\vspace{-0.55em}}{%
									\vspace{0em} 
									\begin{tabular*}{1\linewidth}{@{\extracolsep{\fill}}c@{\hspace{1\tabcolsep}}| ccccc@{\hspace{1\tabcolsep}}|ccccc}
										\hline\hline
										& &\multicolumn{3}{c}{$\gamma= 0.475$}& & &\multicolumn{3}{c}{$\gamma= 0.45$}&\\ \hline 
										$T$ & $50$ & $100$ & $200$ &{$400$} & {$800$} & $50$ & $100$ & $200$ &{$400$} & {$800$} \\
										\hline 
										\hspace{-0.1cm}	Model 1& 0.043& 0.035& 0.023& 0.017& 0.011& 0.071& 0.049 &0.029& 0.021& 0.013 \\
										\hspace{-0.1cm}	Model 2 &0.066& 0.049& 0.032& 0.023 &0.015& 0.089 &0.063& 0.039& 0.026& 0.017 \\
										\hspace{-0.1cm}	Model 3 &0.056& 0.045& 0.030& 0.022 &0.015& 0.091& 0.063 &0.038& 0.027& 0.017 \\
										\hspace{-0.1cm}	Model 4 &0.078 &0.057& 0.036& 0.025& 0.017 & 0.109& 0.075 &0.044 &0.030& 0.019 \\
										\hline\hline
									\end{tabular*}
								}
							\end{minipage}\\[0.4em]
							
							\begin{minipage}{\linewidth}
								\centering
								\subcaptionbox{Case CBM\vspace{-0.55em}}{%
									\vspace{0em} 
									\begin{tabular*}{1\linewidth}{@{\extracolsep{\fill}}c@{\hspace{1\tabcolsep}}| ccccc@{\hspace{1\tabcolsep}}|ccccc}
										\hline\hline
										& &\multicolumn{3}{c}{$\gamma= 0.475$}& & &\multicolumn{3}{c}{$\gamma= 0.45$}&\\ \hline 
										$T$ & $50$ & $100$ & $200$ &{$400$} & {$800$} & $50$ & $100$ & $200$ &{$400$} & {$800$} \\
										\hline 
										\hspace{-0.1cm}	Model 1& 0.044& 0.035& 0.023& 0.017 &0.011& 0.071& 0.050& 0.029& 0.021& 0.013 \\
										\hspace{-0.1cm}	Model 2 &0.075& 0.055 &0.036 &0.025& 0.016& 0.099 &0.069& 0.043& 0.029& 0.018 \\
										\hspace{-0.1cm} Model 3 &0.057& 0.046& 0.030& 0.022& 0.015& 0.093& 0.065& 0.039& 0.027& 0.018 \\
										\hspace{-0.1cm}	Model 4 &0.096& 0.071 &0.046& 0.032 &0.021& 0.127& 0.089 &0.055 &0.037& 0.023 \\
										\hline\hline
									\end{tabular*}
								}
							\end{minipage}\\[0.4em]
							
							\begin{minipage}{\linewidth}
								\centering
								\subcaptionbox{Case BM\vspace{-0.55em}}{%
									\vspace{0em} 
									\begin{tabular*}{1\linewidth}{@{\extracolsep{\fill}}c@{\hspace{1\tabcolsep}}| ccccc@{\hspace{1\tabcolsep}}|ccccc}
										\hline\hline
										& &\multicolumn{3}{c}{$\gamma= 0.475$}& & &\multicolumn{3}{c}{$\gamma= 0.45$}&\\ \hline 
										$T$ & $50$ & $100$ & $200$ &{$400$} & {$800$} & $50$ & $100$ & $200$ &{$400$} & {$800$} \\
										\hline 
										\hspace{-0.1cm}	Model 1& 0.012 &0.010 &0.006& 0.004& 0.004& 0.026 &0.017& 0.009 &0.006& 0.004 \\
										\hspace{-0.1cm}	Model 2 &0.032& 0.022& 0.014& 0.009& 0.007& 0.046& 0.030& 0.017 &0.011& 0.008 \\
										\hspace{-0.1cm}	Model 3&0.018& 0.015& 0.009& 0.006& 0.005& 0.037& 0.024& 0.013& 0.008& 0.006 \\
										\hspace{-0.1cm}	Model 4 &0.040& 0.028& 0.017& 0.011& 0.008& 0.058& 0.037& 0.021& 0.013& 0.009 \\
										\hline\hline
									\end{tabular*}
								}
							\end{minipage}
							
							\vspace{0.4em}
							\begin{minipage}{1\linewidth}
								{\footnotesize Notes: The excess MSPEs are calculated as in Table \ref{tab1}.} 
							\end{minipage}
						\end{table}

						\bibliographystyle{apalike}
						\bibliography{../../../../../bibtexlib/swkrefs}

					\end{document}